\documentclass[11pt,letterpaper]{amsart}
\usepackage{amssymb}
\usepackage{mathtools}
\usepackage{amsrefs}
\usepackage{tikz}
\usetikzlibrary{positioning}
\usepackage{fullpage}
\usepackage[indent]{parskip}
\usepackage{enumitem}

\setlength{\footskip}{30pt}

\allowdisplaybreaks

\title{Expected value and a Cayley--Menger type formula \\ for the generalized earth mover's distance}

\author{William Q. Erickson}
\address{
William Q.~Erickson\\
Department of Mathematics\\
Baylor University \\ 
One Bear Place \#97328\\
Waco, TX 76798} 
\email{Will\_Erickson@baylor.edu}

\theoremstyle{plain}
\newtheorem{theorem}{Theorem}[section]
\newtheorem{prop}[theorem]{Proposition}
\newtheorem{lemma}[theorem]{Lemma}
\newtheorem{cor}[theorem]{Corollary}

\theoremstyle{definition}
\newtheorem{rem}[theorem]{Remark}
\newtheorem{example}[theorem]{Example}

\newcommand{\EMD}{{\rm EMD}}
\newcommand{\x}{\mathbf{x}}
\newcommand{\wt}{{\rm wt}_{_{\rm L}}\!}

\subjclass[2020]{Primary 60B05; Secondary 49Q22}

\keywords{Earth mover's distance, Wasserstein metric, Cayley--Menger type formulas for hypervolumes}

\begin{document}

\begin{abstract}
   The earth mover's distance (EMD), also known as the 1-Wasserstein metric, measures the minimum amount of work required to transform one probability distribution into another.
   The EMD can be naturally generalized to measure the ``distance'' between any number (say $d$) of distributions.
   In previous work (2021), we found a recursive formula for the expected value of the generalized EMD, assuming the uniform distribution on the standard $n$-simplex.
   This recursion, however, was computationally expensive, requiring $\binom{d+n}{d}$ many iterations.
   The main result of the present paper is a nonrecursive formula for this expected value, expressed as the integral of a certain polynomial of degree at most $dn$.
   As a secondary result, we resolve an unanswered problem by giving a formula for the generalized EMD in terms of pairwise EMDs; this can be viewed as an analogue of the Cayley--Menger determinant formula that gives the hypervolume of a simplex in terms of its edge lengths.
\end{abstract}

\maketitle

\section{Introduction}

\subsection{Generalized earth mover's distance}

The earth mover's distance (EMD) is a metric that is widely used for comparing two probability distributions on a common ground space; roughly speaking, the EMD measures the minimum amount of work required to transform one distribution into the other.
The precise definition of ``work'' (or \emph{cost}) depends on the geometry of the underlying ground space; for this reason, the EMD is often able to capture the similarity of datasets more accurately than other methods of comparison.
Indeed, in recent decades, the EMD (also called the \emph{1-Wasserstein metric}) has been used in a growing range of applications, far beyond its origins in optimal transport theory~\cites{Monge,Villani}, in both the physical and social sciences.
We highlight especially the recent survey~\cite{PZ} treating statistical aspects of the EMD. 

The EMD can naturally be adapted to compare any number (say $d$) of distributions, rather than only two at a time: intuitively, this generalized EMD measures the minimum amount of work required to transform all of the distributions into some common distribution (see Remark~\ref{remark:equalize} below), analogous to the notion of a geometric median, and known as their \emph{1-Wasserstein barycenter}~\cites{AB,PRV}.
Throughout this paper, we use the term \emph{generalized EMD} to describe this ``distance'' between $d$ many distributions.
The general problem also appears in the literature under other names, such as \emph{multi-marginal optimal transport}.
The study of $d$-way contingency tables with given margins can be traced back at least to Fr\'echet~\cite{Frechet} and Feron~\cite{Feron} in the 1950s.
We also point out a detailed algorithmic treatment of the generalized EMD by Bein et al.~\cite{Bein} in the 1990s, motivated by the statistical problem of assigning joint probabilities to samples in a survey; see also the algorithmic paper by Kline~\cite{Kline}.
As a recent example studying the commutative algebraic features of the class of $d$-way contingency tables with given margins, we point out Pistone--Rapallo--Rogantin~\cite{Pistone}.
See~\cite{DKS} for a general historical reference.
We also refer the reader to the many references in~\cite{AB} describing current applications of Wasserstein barycenters in natural language processing and machine learning.

\subsection{Overview of results}

The present paper develops our previous treatment~\cite{Erickson20} of the generalized EMD in two specific directions.
In that treatment, we considered $d$ many points in the standard $n$-simplex $\mathcal{P}_n$, i.e., probability distributions on the ground space $\{1, \ldots, n+1\}$, equipped with the usual $L^1$ distance.
The main result~\cite{Erickson20}*{Thm.~7} was a recursive formula for the expected value of the generalized EMD, assuming the uniform distribution on $\mathcal{P}_n$.
In particular, we presented this formula as a special case of the following:
\[
    \hspace{-8cm}\mathbb{E}[\EMD] \text{ on } \mathcal{P}_{n_1} \times \cdots \times \mathcal{P}_{n_d} =
\]
\begin{equation}
    \label{recursion}
    \frac{\sum_{i=1}^d n_i \big(\mathbb{E}[\EMD] \text{ on } \mathcal{P}_{n_1} \times \cdots \times \mathcal{P}_{n_i-1} \times \cdots \times \mathcal{P}_{n_d}\big) + C(n_1, \ldots, n_d)}{1 + \sum_{i=1}^d n_i},
\end{equation}
where $C$ denotes the $L^1$ dispersion of a sample of $d$ integers.
The desired formula is obtained from the specialization $n_1 = \cdots = n_d = n$.
Note, however, that this specialization depends upon first obtaining $\mathbb{E}[\EMD]$ for every tuple $(n_1, \ldots, n_d)$ such that each $n_i \leq n$.
Although one can exploit symmetry to restrict the recursion to, say, only the weakly increasing sequences $0 \leq n_1 \leq \cdots \leq n_d \leq n$, nevertheless this still requires $\binom{d+n}{d}$ many iterations of the formula~\eqref{recursion} before arriving at the desired expected value; hence for fixed $n$, the length of this recursion is $O(d^n)$, and vice versa.

The primary purpose of this paper is to give a new, nonrecursive formula for the expected value.
Our main result (Theorem~\ref{thm:expected value}) is such a formula:
\begin{equation}
    \label{main result preview}
    \mathbb{E}[\EMD] \text{ on }(\mathcal{P}_n)^d = \int_0^1 \sum_{j=1}^n \sum_{k=1}^{d-1} \min\{k, \: d-k\} \binom{d}{k} F_j(z)^k \Big(1-F_j(z)\Big)^{d-k} \: dz,
\end{equation}
where each $F_j(z)$ is a certain polynomial of degree $n$ (see Lemma~\ref{lemma:Fjz}).
Hence the integrand is itself a polynomial of degree at most $dn$.
In terms of computational complexity, then, this integral formula~\eqref{main result preview} is a vast improvement upon the recursive approach above: for example, when $n=6$ and $d=100$, the recursion~\eqref{recursion} requires approximately $1.7 \times 10^9$ iterations, whereas the integral~\eqref{main result preview} can be evaluated in roughly two seconds using Mathematica on a standard PC (returning the expected value $72.6685$).
We would still like to resolve this formula into a ``closed'' form without the integral, however, and we leave this as an open problem.

The second purpose of this paper is to answer a question we posed in~\cite{Erickson20}*{p.~150}.
In that paper, we observed that when $d=3$, the generalized EMD of three distributions equals half the sum of the three pairwise EMDs.
We also observed that such a phenomenon does \emph{not} continue for $d > 3$: in general, it is impossible to reconstruct the EMD of $d$ distributions from the $\binom{d}{2}$ many pairwise EMDs.
Hence the generalized EMD does indeed contain more information than the ``classical'' EMD (i.e., the $d=2$ case).
Having made these observations, a natural question was to describe the exact relationship between the generalized EMD and the pairwise EMDs: in particular, what additional information is required (beyond the pairwise EMDs) to compute the generalized EMD?  Also, for arbitrary $d$, can we characterize those $d$-tuples of distributions whose EMD \emph{can} be determined from the pairwise EMDs alone, just as in the $d=3$ case?

We settle both these questions in Section~\ref{sec:CM}.
First, we explicitly determine the ``obstruction'' to reconstructing the EMD from the pairwise EMDs: to each $d$-tuple $\x \in (\mathcal{P}_n)^d$, we associate a formal polynomial $G(\x;q)$ whose second derivative at $q=1$ measures this obstruction, resulting in the following general formula (see Theorem~\ref{thm:CM}):
\begin{equation}
    \label{CM preview}
    \EMD(\x) = \frac{1}{d-1}\left[G''(\x;1) + \sum \text{pairwise EMDs}\right].
\end{equation}
Second, we give a statistical interpretation for the vanishing of $G''(\x;1)$, in terms of the cumulative distributions of the distributions in $\x$.
In this way (see Corollary~\ref{cor:EMD=edges}), we characterize those tuples whose EMD truly is just the scaled sum of the pairwise EMDs.
We emphasize that the formula~\eqref{CM preview} can be viewed as an analogue to the celebrated Cayley--Menger determinant formula (see~\eqref{CM} below), which expresses the volume of a simplex in $\mathbb{R}^{d-1}$ in terms of its edge lengths.
Indeed, if one views the generalized EMD as a kind of ``volume'' determined by $d$ distributions, then the pairwise EMDs are precisely the ``edge lengths.''
Of course, the resulting Cayley--Menger type formula~\eqref{CM preview} is quite different than its Euclidean analogue.

We note that the present paper has also provided an opportunity to give shorter and more intuitive proofs of certain results that were fundamental to our original treatment~\cite{Erickson20}; in particular, see Propositions~\ref{old C formula} and~\ref{prop:C is Monge} below.

\subsection{Related work}

The question of computing the expected value of the EMD (in the classical case $d=2$) was first addressed by Bourn and Willenbring~\cite{BW20}, who found a recursive formula by means of generating functions.
Subsequently, Frohmader and Volkmer~\cite{FV} solved this recursion using analytic methods.
The papers~\cites{EK1,EK2} addressed the discrete version of the problem, studying integer-valued histograms rather than probability distributions.

\subsection*{Acknowledgments}
The author sincerely thanks the anonymous referees for their careful reading of the manuscript, and for their valuable suggestions that improved this paper.

\section{Preliminaries}

\subsection{Basic notation}

Throughout the paper, we fix a positive integer $n$, and we study the elements of the standard $n$-simplex
\[
\mathcal{P}_n \coloneqq \Big\{ x = (x_1, \ldots, x_{n+1}) : \textstyle \sum_j x_j = 1, \text{ where each } x_j \geq 0 \Big\} \subset \mathbb{R}^{n+1}.
\]
(To avoid confusion later, we write $\mathcal{P}_n$ rather than the more usual $\Delta^n$, since we will make frequent use of the symbol $\Delta$ to denote successive differences between vector components.)
Each $x \in \mathcal{P}_n$ can be viewed as a probability distribution on the set $[n+1] \coloneqq \{1, \ldots, n+1\}$.
We will denote the cumulative distribution of $x$ by a capital $X$, where $X_j \coloneqq x_1 + \cdots + x_j$.
Since necessarily $X_{n+1} = 1$, we will suppress this final component and simply write $X = (X_1, \ldots, X_n)$.

We fix a positive integer $d \geq 2$, and we use a boldface $\x$ to denote a $d$-tuple of distributions, while a capital boldface $\mathbf{X}$ denotes the $d$-tuple of cumulative distributions.
Within a $d$-tuple, we index the individual distributions by superscripts $1, \ldots, d$.
Hence when there is a superscript only, we are denoting a distribution in $\mathcal{P}_n$; when there is a subscript as well, we are denoting a certain component of that distribution.
As is standard, we use parentheses around indices to denote the order statistics, and we use the symbol $\Delta$ whenever referring to the difference between successive order statistics.
Below in~\eqref{X notation}, for the reader's convenience, we set down all of the notation we will use throughout the paper (see also Example~\ref{ex:G} at the end):
\begin{align}
\label{X notation}
\begin{split}
    x &= (x_1, \ldots, x_{n+1}) \in \mathcal{P}_n,\\
    X &= (X_1, \ldots, X_n), \text{ where } X_j \coloneqq x_1 + \cdots + x_j, \text{ and } X_0 \coloneqq 0,\\
    x_{(i)} & \coloneqq \text{the $i$th smallest component of $x$},\\
    \Delta x_{(i)} & \coloneqq x_{(i+1)} - x_{(i)}, \quad \text{where }\Delta x_{(0)} \coloneqq x_{(1)}, \\
    \x &= (x^1, \ldots, x^d) \in (\mathcal{P}_n)^d,\\
    \mathbf{X} &= (X^1, \ldots, X^d),\\
    x^i_j & \coloneqq \text{the $j$th component of $x^i$},\\
    X^i_j &\coloneqq x^i_1 + \cdots + x^i_j = \text{the $j$th component of $X^i$},\\
    \mathbf{X}^\bullet_j & \coloneqq (X^1_j, \ldots, X^d_j),\\
    X^{(i)}_j & \coloneqq \text{the $i$th smallest component of $\mathbf{X}^\bullet_j$}, \\
    \Delta X^{(i)}_j & \coloneqq X^{(i+1)}_j - X^{(i)}_j.
    \end{split}
\end{align}

\subsection{Dirichlet and beta distributions; order statistics}

As usual in the context of continuous random variables, we write PDF for probability density function, and CDF for cumulative distribution function.
The \emph{Dirichlet distribution}, denoted by ${\rm Dir}(\alpha)$ with parameter $\alpha = (\alpha_1, \ldots, \alpha_{n+1})$, is a certain distribution on  $\mathcal{P}_n$.
For our purposes, we will need only the following standard facts regarding the Dirichlet distribution, the first of which involves the \emph{beta distribution}, denoted by ${\rm Beta}(a,b)$:
\begin{equation}
    \label{beta}
    \text{PDF of ${\rm Beta}(a,b)$ is given by $f(z) = \frac{\Gamma(a+b)}{\Gamma(a) \Gamma(b)} \: z^{a-1} (1-z)^{b-1}$}, \quad z \in [0,1].
\end{equation}
(To avoid confusion with points $x \in \mathcal{P}_n$, we will take $z$ as our variable for all PDFs and CDFs in this paper.)
In general, if $x \sim {\rm Dir}(\alpha)$, then the marginal distributions of ${\rm Dir}(\alpha)$ are the probability distributions of the individual components $x_j$ of $x$, and are given by beta distributions as follows:
\begin{equation}
    \label{lambda i beta}
    x_j \sim {\rm Beta}(\alpha_j, \: \alpha_1 + \cdots + \widehat{\alpha}_j + \cdots + \alpha_{n+1}),
\end{equation}
where the hat denotes an omitted term.
The Dirichlet distribution enjoys the \emph{aggregation property}, meaning that if $x \sim {\rm Dir}(\alpha)$, then removing two components $x_j$ and $x_k$ and replacing them by their sum yields
\[
    (x_1, \ldots, \widehat{x}_j, \ldots, \widehat{x}_k, x_j + x_k, \ldots, x_{n+1}) \sim {\rm Dir}(\alpha_1, \ldots, \widehat{\alpha}_j, \ldots, \widehat{\alpha}_k, \alpha_j + \alpha_k, \ldots, \alpha_{n+1}).
\]
In particular, by repeatedly aggregating initial components, we have the following formula for the aggregation of the first $j$ components:
\begin{equation}
\label{pre Xj}
    (x_1 + \cdots + x_j, x_{j+1}, \ldots, x_{n+1}) \sim {\rm Dir}(\alpha_1 + \cdots + \alpha_j, \alpha_{j+1}, \ldots, \alpha_{n+1}).
\end{equation}
Recalling from~\eqref{X notation} that $X_j \coloneqq x_1 + \cdots + x_j$, we conclude from~\eqref{lambda i beta} and~\eqref{pre Xj} that 
\begin{equation}
    \label{Lambda j Beta}
    X_j \sim {\rm Beta}(\alpha_1 + \cdots + \alpha_j, \: \alpha_{j+1} + \cdots + \alpha_{n+1}).
\end{equation}
In this paper, we will consider only the uniform distribution on $\mathcal{P}_n$, which corresponds to the parameter $\alpha = (1, \ldots, 1)$, also known as the \emph{flat} Dirichlet distribution.
In this case, the equation~\eqref{Lambda j Beta} specializes to
\begin{equation}
    \label{Xj sim Beta flat}
   X_j \sim {\rm Beta}(j, \:n-j+1).
\end{equation}

The CDF of ${\rm Beta}(a,b)$ is called the \emph{regularized incomplete beta function}, typically denoted by $I_z(a,b)$.
In this paper, we will be interested in the CDF of $X_j$, for which we will use the simpler notation $F_j(z)$, rather than $I_z(j, \: n-j+1)$.
We will compute $F_j(z)$ explicitly in Lemma~\ref{lemma:Fjz}.

Finally, we recall the following standard fact regarding order statistics.
Let $Y_1, \ldots, Y_d$ form a random sample from a random variable $Y$, whose CDF is $F_Y(z)$.
Let $Y_{(i)}$ denote the \emph{$i$th order statistic}, i.e., the $i$th smallest element of $\{Y_1, \ldots, Y_d\}$.
It is well known~\cite{Mood}*{Thm.~11, p.~252} that, for each $1 \leq i \leq d$, the $i$th order statistic $Y_{(i)}$ is a random variable whose CDF is given by
\begin{equation}
    \label{F order stat}
    F_{Y_{(i)}}(z) = \sum_{k=i}^d \binom{d}{k} F_Y(z)^k \big(1-F_Y(z)\big)^{d-k}.
\end{equation}

\section{The generalized earth mover's distance}

This section defines the generalized earth mover's distance between a tuple of distributions $\x = (x^1, \ldots, x^d) \in (\mathcal{P}_n)^d$, and culminates in the key Proposition~\ref{prop:EMD equals C}, where we 
prove a straightforward way to compute this distance in terms of the cumulative distributions $X^1, \ldots, X^d$.

\subsection{Classical and generalized EMD}

For the classical case $d=2$, the earth mover's distance (EMD) is typically defined in terms of an optimal solution to the \emph{transportation problem} (often called the Hitchcock, Kantorovich, Koopman, and/or Monge problem).
When $d=2$, the problem is to determine a minimal-cost plan for transporting mass from supply sites to demand sites; both the distribution of the mass among the supply sites
and the desired distribution of the mass among the demand sites are viewed as probability distributions, and the cost of moving one unit of mass between supply site $i$ and demand site $j$ is determined by some cost function $C(i,j)$.
When the cost function is an $L^1$ distance, the minimum required cost is called the EMD between the two distributions.
(More generally, if the cost function is an $L^p$ distance, the minimum cost is called the \emph{$p$-Wasserstein distance}.)
In applications where the number of supply and demand sites is the same, the problem can be viewed as redistributing either distribution in order to obtain the other.
This problem generalizes naturally to any number $d$ of distributions, in which case the problem is to find the minimal-cost way to transform them all into the same distribution (i.e., a 1-Wasserstein barycenter, as mentioned in the introduction).
Explicitly, the generalized transportation problem can be stated as follows.

Let $\x = (x^{1}, \ldots, x^{d}) \in (\mathcal{P}_n)^d$.
Let $C$ be a $d$-dimensional, $(n+1) \times \cdots \times (n+1)$ array, whose entries we write as nonnegative real numbers $C(y)$ for each $y = (y_1, \ldots, y_d) \in [n+1]^d$.
This $C$ (called the \emph{cost array}) depends on the geometry one imposes on the underlying ground space $[n+1]$.
Intuitively, each entry $C(y)$ gives the minimum cost of moving $d$ units of mass (placed at sites $y_1, \ldots, y_d$) to a common site.
The generalized transportation problem asks us to find an array $T_{\x}$ (called a \emph{transport plan} with respect to $\x$), with the same dimensions as $C$, that solves the following linear programming problem:
\begin{align}
    \text{Minimize} \quad & \sum_{\mathclap{y \in [n+1]^d}} C(y) T_{\x}(y), \label{transportation problem}\\
    \text{subject to} \quad & T_{\x}(y) \geq 0 & \text{for all } y \in [n+1]^d, \nonumber\\
    \text{and} \quad & \sum_{\mathclap{y: \: y_i = j}} T_{\x}(y) = x^{i}_j & \text{for all } i \in [d] \text{ and } j \in [n+1]. \nonumber
\end{align}
In the special case where the cost array $C$ is given by an $L^1$ distance (to be defined below in~\eqref{C actual}), we define the \emph{(generalized) earth mover's distance}, still denoted by $\EMD(\x)$, to be the value of the objective function $\sum_y C(y) T^*_\x(y)$, where $T^*_\x$ is an \emph{optimal} transport plan satsifying~\eqref{transportation problem}.

\subsection{Specializing the cost array to one-dimensional ground space}

In this paper, we view the ground space $[n+1] \coloneqq \{1, \ldots, n+1\}$ embedded naturally in the real line, equipped with the usual $L^1$ distance.
Thus the cost of moving one unit of mass from $i$ to $j$ is simply $|i-j|$.
Therefore, to determine a formula for the entries $C(y)$ of the cost array, we observe that if we place one unit of mass at each of the (not necessarily distinct) points $y_1, \ldots, y_d \in [n+1]$, then the minimum cost of moving all $d$ units of mass to a common point in $[n+1]$ is given by the $L^1$ dispersion of the sample $y_1, \ldots, y_d$, namely
\begin{equation}
    \label{C integral}
    \min_{a \in [n+1]} \Big( |y_1 - a| + \cdots + |y_d - a| \Big).
\end{equation}
Although (in the context of the cost array $C$) the coordinates $y_i$ lie in $[n+1]$, nonetheless it will be convenient later for us to extend the domain of $C$ to all of $\mathbb{R}^d$; therefore we make the following definition:
\begin{equation}
    \label{C actual}
    C(y) \coloneqq \min_{a \in \mathbb{R}} \Big( |y_1 - a| + \cdots + |y_d - a| \Big).
\end{equation}
It will follow from the proof of Proposition~\ref{old C formula} that~\eqref{C integral} is precisely the restriction of~\eqref{C actual} to the domain $[n+1]^d$.
(The following is essentially Proposition 1 from our earlier paper~\cite{Erickson20}, but the proof below is shorter and more intuitive.)

\begin{prop}
\label{old C formula}
    Let $C(y)$ be as defined in~\eqref{C actual}.
    For all $y = (y_1, \ldots, y_d) \in \mathbb{R}^d$, we have
    \[
    C(y) = \sum_{i=1}^{\lfloor d/2 \rfloor} y_{(d-i+1)} - y_{(i)},
    \]
    where $y_{(i)}$ denotes the $i$th smallest component of $y$.
\end{prop}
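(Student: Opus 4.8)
The problem is to minimize $g(a) = \sum_{i=1}^d |y_i - a|$ over $a \in \mathbb{R}$, and then evaluate the minimum. The plan is to exploit the fact that $g$ is a piecewise-linear convex function of $a$, so its minimum is attained at a median of the sample $y_1, \ldots, y_d$. First I would reduce to the order statistics: since $g(a) = \sum_{i=1}^d |y_{(i)} - a|$, nothing is lost by assuming $y_{(1)} \le \cdots \le y_{(d)}$. The derivative of $g$ (where defined) is the integer $\#\{i : y_{(i)} < a\} - \#\{i : y_{(i)} > a\}$, which is nondecreasing in $a$; it is negative for $a < y_{(\lceil d/2\rceil)}$ and nonnegative thereafter, so any median minimizes $g$.

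Next I would evaluate $g$ at a minimizing point by pairing up the order statistics from the outside in. The key algebraic identity is that for any $a$ lying between $y_{(i)}$ and $y_{(d-i+1)}$ — in particular for $a$ a median, which lies between the outermost pair and hence between every nested pair — we have $|y_{(i)} - a| + |y_{(d-i+1)} - a| = (a - y_{(i)}) + (y_{(d-i+1)} - a) = y_{(d-i+1)} - y_{(i)}$, independent of $a$. Summing this over $i = 1, \ldots, \lfloor d/2 \rfloor$ handles all the terms in pairs; when $d$ is odd, the single leftover middle term is $|y_{(\lceil d/2 \rceil)} - a| = 0$ when we take $a = y_{(\lceil d/2\rceil)}$, so it contributes nothing. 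This yields exactly $C(y) = \sum_{i=1}^{\lfloor d/2\rfloor} \big(y_{(d-i+1)} - y_{(i)}\big)$.

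I do not anticipate a serious obstacle here; the argument is elementary. The one point requiring a little care is the justification that a median lies in each nested interval $[y_{(i)}, y_{(d-i+1)}]$ for $i \le \lfloor d/2 \rfloor$ — this is immediate from $i \le \lfloor d/2 \rfloor \le \lceil d/2 \rceil \le d-i+1$ and the monotonicity of the order statistics — and the bookkeeping distinction between the even and odd cases for $d$. As a final remark, the claim in the paragraph preceding the proposition (that \eqref{C integral} is the restriction of \eqref{C actual} to $[n+1]^d$) follows immediately from the above, since when all $y_i \in [n+1]$ the median $y_{(\lceil d/2\rceil)}$ itself lies in $[n+1]$, so restricting the minimization in \eqref{C actual} to integer $a \in [n+1]$ does not change the value.
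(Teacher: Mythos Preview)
Your proposal is correct and follows essentially the same approach as the paper: both identify the median $y_{(\lceil d/2\rceil)} = y_{(\lfloor(d+1)/2\rfloor)}$ as a minimizer and then evaluate the minimum by pairing order statistics from the outside in, handling the even/odd cases identically. The only cosmetic difference is that you justify minimality via the piecewise-linear derivative of $g$, whereas the paper uses the triangle inequality $|y_{(i)}-a|+|y_{(d-i+1)}-a|\ge y_{(d-i+1)}-y_{(i)}$ directly; these are interchangeable one-line observations.
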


\begin{proof}
    Let $m \coloneqq \lfloor(d+1)/2\rfloor$.
    We claim that the minimum in~\eqref{C actual} is achieved when $a = y_{(m)}$.
    At this value, we have
    \begin{align}
        &\phantom{=.}|y_1 - y_{(m)}| + \cdots + |y_d - y_{(m)}| \nonumber \\
        & = |y_{(1)} - y_{(m)}| + \cdots + |y_{(d)} - y_{(m)}| \nonumber \\
        &= \sum_{i=1}^{m} \big(|y_{(i)} - y_{(m)}| + |y_{(d-i+1)} - y_{(m)}| \big) \label{sorted sum}\\
        &= \sum_{i=1}^m (y_{(m)} - y_{(i)}) + (y_{(d-i+1)} - y_{(m)}) && \text{since $y_{(i)} \leq y_{(m)} \leq y_{(d-i+1)}$} \nonumber\\
        &= \sum_{i=1}^m y_{(d-i+1)} - y_{(i)}, \label{final}
    \end{align}
    which equals the formula stated in the proposition: note that if $d$ is even, then $m = \lfloor d/2 \rfloor$, and if $d$ is odd, then $m = \lfloor d/2 \rfloor + 1$ but the $m$th term in the sum is $y_{(m)} - y_{(m)} = 0$.
    To prove minimality, we use the triangle inequality: namely, for $a \in \mathbb{R}$, we have $|y_{(i)} - a| + |y_{(d-i+1)} - a| \geq y_{(d-i+1)} - y_{(i)}$.
    Hence upon replacing $y_{(m)}$ by an arbitrary $a$, the sum~\eqref{sorted sum} is no less than the sum~\eqref{final}.
\end{proof}

\begin{rem}
\label{remark:equalize}
    As mentioned in the introduction, with respect to the cost function $C(y)$ in~\eqref{C actual}, the quantity $\EMD(\x)$ can be viewed as the minimum cost required to equalize the distributions $x^1, \ldots, x^d$.
    To make this precise, we observe that each transport plan $T_\x$ encodes directions for transforming $x^1, \ldots, x^d$ into some (not necessarily unique) common distribution, as follows.
    For each $y \in [n+1]^d$, choose some $a_y \in [n+1]$ such that $C(y) = \sum_{i=1}^d |y_i - a_y|$.
    (By definition, such an $a_y$ exists.)
    Then move $T_\x(y)$ units of mass from the coordinate $x^i_{y_i}$ to the coordinate $x^i_{a_y}$, for all $1 \leq i \leq d$; in other words, replace each $x^i_{y_i}$ by $x^i_{y_i} - T_\x(y)$, and replace each $x^i_{a_y}$ by $x^i_{a_y} + T_\x(y)$.
    It follows from the two constraints on $T_\x$ in~\eqref{transportation problem} that, upon making these replacements for every position $y$, we have $x^1 = \cdots = x^d$.
    Moreover, recall that for each $y$, we defined $C(y)$ in~\eqref{C actual} so that it would measure the (one-dimensional) cost of moving one unit of mass from each of the $y_i$'s to $a_y$; therefore, the quantity $C(y) T_\x(y)$ does indeed measure the total cost of performing the replacements described above.
    Thus since $\EMD(\x)$ is defined as the minimum value of the objective function $\sum_y C(y) T_\x(y)$ in~\eqref{transportation problem}, we have justified our description of $\EMD(\x)$ as the minimum cost required to equalize the distributions $x^i$.    
\end{rem}

Our next task (Proposition~\ref{prop:C} below) is to restate the formulation of $C(y)$ in Proposition~\ref{old C formula} in several ways which will be useful in different contexts.
Before stating the proposition, we define the following sign function for $1 \leq i \leq d$:
\begin{equation}
    \label{epsilon}
    \epsilon(i) \coloneqq \begin{cases}
        -1, & i < \frac{d+1}{2},\\[1ex]
        \phantom{+}0, & i = \frac{d+1}{2},\\[1ex]
        +1, & i > \frac{d+1}{2}.
    \end{cases}
\end{equation}
Note that $\sum_{i=1}^d \epsilon(i) = 0$.
We also recall the notion of \emph{Lee weight}, which was introduced in~\cite{Lee} in the context of coding theory.
The Lee weight, in a sense, measures an element's ``absolute value'' in the cyclic group $\mathbb{Z}/d\mathbb{Z}$, and is denoted as follows:
\begin{align}
\label{wt}
\begin{split}
    \wt : \{0,1,\ldots, d\} & \longrightarrow \{0,1,\ldots, \lfloor d/2 \rfloor \},\\
    k & \longmapsto \min\{k, \: d-k\}.
\end{split}
\end{align}
We include both $0$ and $d$ in the domain for the sake of convenience; in this paper we have no need for the cyclic group structure, and so we view the domain of $\wt$ as merely a subset of $\mathbb{Z}$.
Comparing~\eqref{epsilon} and~\eqref{wt}, we observe that $\epsilon$ can be viewed as the difference operator on $\wt$, in the sense that $\epsilon(i) = \wt(i-1) - \wt(i)$.
Consequently, we have
\begin{equation}
    \label{wt and epsilon}
    \wt(i) = - \sum_{k=1}^i \epsilon(k).
\end{equation}

\begin{prop}
    \label{prop:C}
    Let $C$ be as defined in~\eqref{C actual}.
    Then $C$ is also given by the two equivalent formulas
    \begin{align}
        C(y) &= \sum_{i=1}^d \epsilon(i) \: y_{(i)} \label{C via epsilon} \\
        &= \sum_{i=1}^{d-1} \wt(i) \cdot \Delta y_{(i)}, \label{C via wt}
    \end{align}
    where $\epsilon(i)$ is defined in~\eqref{epsilon}, $\wt(i)$ is defined in~\eqref{wt}, and all other notation is defined in~\eqref{X notation}.
    Moreover, upon restricting the domain to $[n+1]^d$, we have a third equivalent formula
    \begin{equation}
        \label{C via j}
        C(y) = \sum_{j=1}^{n} \wt\big( \#\{i : y_i > j\}\big), \qquad y \in [n+1]^d.
    \end{equation}
\end{prop}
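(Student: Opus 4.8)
The plan is to establish the three formulas in the order they are stated, obtaining each from its predecessor by a short formal computation. The first formula, \eqref{C via epsilon}, is essentially a rewriting of Proposition~\ref{old C formula}: in the sum $\sum_{i=1}^{\lfloor d/2\rfloor}\big(y_{(d-i+1)}-y_{(i)}\big)$, each $y_{(i)}$ with $1\leq i\leq\lfloor d/2\rfloor$ appears with coefficient $-1$, each $y_{(i)}$ with $d-\lfloor d/2\rfloor+1\leq i\leq d$ appears with coefficient $+1$, and when $d$ is odd the middle order statistic $y_{((d+1)/2)}$ appears with coefficient $0$. Comparing with the definition~\eqref{epsilon} of $\epsilon$, these coefficients are exactly $\epsilon(1),\ldots,\epsilon(d)$, which gives \eqref{C via epsilon}.

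Formula \eqref{C via wt} then follows from \eqref{C via epsilon} by summation by parts. Writing $\Delta y_{(i)}=y_{(i+1)}-y_{(i)}$, expanding, and shifting the index in one of the two resulting sums yields
\[
\sum_{i=1}^{d-1}\wt(i)\,\Delta y_{(i)}=\sum_{i=2}^{d}\wt(i-1)\,y_{(i)}-\sum_{i=1}^{d-1}\wt(i)\,y_{(i)}=\sum_{i=1}^{d}\big(\wt(i-1)-\wt(i)\big)\,y_{(i)},
\]
where the last equality uses $\wt(0)=\wt(d)=0$ to extend both sums to the full range $1\leq i\leq d$. Since $\wt(i-1)-\wt(i)=\epsilon(i)$ (as observed just before \eqref{wt and epsilon}), the right-hand side equals $\sum_{i=1}^{d}\epsilon(i)\,y_{(i)}=C(y)$ by \eqref{C via epsilon}.

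For the third formula \eqref{C via j}, valid on $[n+1]^d$, I would convert the sum in \eqref{C via wt} into a sum over ``layers'' $j=1,\ldots,n$. Since $y\in[n+1]^d$, every order statistic $y_{(i)}$ is an integer in $\{1,\ldots,n+1\}$, so $\Delta y_{(i)}$ equals the number of integers $j$ with $y_{(i)}\leq j<y_{(i+1)}$, all of which lie in $\{1,\ldots,n\}$; in particular a repeated coordinate ($y_{(i)}=y_{(i+1)}$) contributes nothing. Substituting this into \eqref{C via wt} and interchanging the order of summation gives
\[
C(y)=\sum_{j=1}^{n}\ \sum_{\substack{1\leq i\leq d-1\\ y_{(i)}\leq j<y_{(i+1)}}}\wt(i).
\]
For each fixed $j$ the inner sum has at most one term, because the $y_{(i)}$ are nondecreasing; and when a term occurs, its index is $i=\#\{l:y_l\leq j\}=d-\#\{l:y_l>j\}$. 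Invoking the symmetry $\wt(i)=\wt(d-i)$, the inner sum then equals $\wt\big(\#\{l:y_l>j\}\big)$. Finally one checks the two boundary layers: if every $y_l\leq j$ then $\#\{l:y_l>j\}=0$ and the inner sum is empty, in agreement with $\wt(0)=0$; and if every $y_l>j$ then $\#\{l:y_l>j\}=d$ and again the inner sum is empty, in agreement with $\wt(d)=0$. This establishes \eqref{C via j}.

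The manipulations behind \eqref{C via epsilon} and \eqref{C via wt} are purely formal, so I expect the only real care to be needed in the layer argument for \eqref{C via j}: one must handle repeated coordinates and the two boundary thresholds correctly, and keep track of the symmetry $\wt(i)=\wt(d-i)$ that converts a count of coordinates $\leq j$ into a count of coordinates $>j$. This bookkeeping, rather than any substantive difficulty, is the main obstacle.
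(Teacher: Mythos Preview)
Your proof is correct and follows essentially the same route as the paper: \eqref{C via epsilon} is read off directly from Proposition~\ref{old C formula}; \eqref{C via wt} is derived from \eqref{C via epsilon} via Abel summation using $\wt(i-1)-\wt(i)=\epsilon(i)$ (the paper does the equivalent computation in the other direction, expanding $y_{(k)}=\sum_{i=0}^{k-1}\Delta y_{(i)}$); and \eqref{C via j} is obtained by the same layer decomposition, interchanging the order of summation and applying the symmetry $\wt(i)=\wt(d-i)$. Your handling of ties and boundary layers is fine and mirrors what the paper leaves implicit.
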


\begin{proof}
    It is clear that the formula~\eqref{C via epsilon} is just a restatement of Proposition~\ref{old C formula}.
    To show that~\eqref{C via epsilon} is equivalent to~\eqref{C via wt}, we observe that
    \begin{align*}
        \sum_{k=1}^d \epsilon(k) \: y_{(k)} & = \sum_{k=1}^d \epsilon(k) \sum_{i=0}^{k-1} \Delta y_{(i)}\\
        &= \sum_{i=0}^{d-1} \left(\sum_{k=i+1}^d \epsilon(k) \right) \Delta y_{(i)} \\
        &= \sum_{i=0}^{d-1} \underbrace{\left(- \sum_{k=1}^i \epsilon(k) \right)}_{\text{since $\sum_{k=1}^d \epsilon(k) = 0$}} \Delta y_{(i)}\\
        &= \sum_{i=0}^{d-1} \underbrace{\wt(i)}_{\mathclap{\text{by~\eqref{wt and epsilon}}}} \cdot \Delta y_{(i)},
    \end{align*}
    where the $i=0$ term vanishes since $\wt(0)=0$.
    Finally, supposing that $y \in [n+1]^d$, we show that~\eqref{C via wt} is equivalent to~\eqref{C via j} as follows:
    \begin{align*}
        \sum_{i=1}^{d-1} \wt(i) \cdot \Delta y_{(i)} &= \sum_{i=1}^{d-1} \wt(i) \cdot \#\Big\{ j \in [n] : y_{(i)} \leq j < y_{(i+1)} \Big\}\\
        &= \sum_{i=1}^{d-1} \wt(i) \sum_{j=y_{(i)}}^{y_{(i+1)}-1}1\\
        &= \sum_{j=1}^{n} \wt\big(\underbrace{\max\{ i : y_{(i)} \leq j\}}_{\#\{i : y_i \leq j \}} \big) \\
        &= \sum_{j=1}^n \wt \big( \# \{i: y_i > j \} \big),
    \end{align*}
    where the last line follows from the fact that $\wt(k) = \wt(d-k)$.
    \end{proof}

\subsection{An explicit formula for the generalized EMD}

Recall that the generalized EMD is defined as the minimum value of the objective function in~\eqref{transportation problem}, in the special case where the cost array $C$ is the one given in~\eqref{C integral} (or equivalently~\eqref{C actual}).
The takeaway from this subsection will be Proposition~\ref{prop:EMD equals C}, giving an explicit formula for $\EMD(\x)$.
In order to do this, we will appeal to a well-known algorithm for constructing an optimal transport plan $T^*_\x$ to solve the generalized transportation problem~\eqref{transportation problem}.
This algorithm, however, is valid if and only if the cost array $C$ in~\eqref{transportation problem} possesses the \emph{Monge property}, which we now explain below.
It will turn out (see Proposition~\ref{prop:C is Monge}) that the array $C$ given in~\eqref{C integral} does indeed have this Monge property.

Given $y=(y_1, \ldots, y_d)$ and $z = (z_1, \ldots, z_d)$, let $\min(y,z)$ and $\max(y,z)$ denote the vectors which are the componentwise minimum and maximum of $y$ and $z$; that is, $\min(y,z)_i \coloneqq \min\{y_i, z_i\}$ and $\max(y,z)_i \coloneqq \max\{y_i, z_i\}$.
Following~\cite{Bein}*{Def.~2.1}, we say that a $d$-dimensional array $A$ has the \emph{Monge property} if
\begin{equation}
\label{Monge}
    A(\min(y,z))+A(\max(y,z))
\leq A(y)+A(z)
\end{equation}
for all positions $y$ and $z$ in $A$.
It is shown in~\cite{Park}*{Lemma~1.3} that $A$ has the Monge property if and only if every two-dimensional subarray has the Monge property.
For two-dimensional arrays, in turn~\cite{Burkard}*{eqn.~(6)}, it suffices to check adjacent entries only; that is, a two-dimensional array $B$ is Monge if and only if
\[
B(\ell,m) + B(\ell+1, m+1) \leq B(\ell+1,m) + B(\ell, m+1),
\]
for all $\ell$ and $m$ such that all four entries exist.
It follows that a $d$-dimensional array $A$ is Monge if and only if, for all choices of two coordinates, we have
\begin{align}
\label{Monge condition}
\begin{split}
& A(*,\ldots,*,\ell,*,\ldots,*,m,*,\ldots,*) + A(*,\ldots,*,\ell+1,*,\ldots,*,m+1,*,\ldots,*) \\
\leq \: & A(*,\ldots,*,\ell+1,*,\ldots,*,m,*,\ldots,*) + A(*,\ldots,*,\ell,*,\ldots,*,m+1,*,\ldots,*)
\end{split}
\end{align}
for all $\ell$ and $m$ in those two  coordinates (where the *'s denote the remaining coordinates, which are held fixed in all four terms).

\begin{prop}[Prop.~2 in~\cite{Erickson20}]
    \label{prop:C is Monge}
    The cost array $C$, with entries defined as in~\eqref{C integral}, has the Monge property~\eqref{Monge}.
\end{prop}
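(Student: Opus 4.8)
The plan is to recognize the Monge property~\eqref{Monge} as precisely the statement that $C$ is \emph{submodular} on the lattice $\mathbb{R}^d$ with componentwise order, and to deduce this submodularity directly from the defining representation $C(y) = \min_{a \in \mathbb{R}} F(y,a)$, where $F(y,a) \coloneqq \sum_{i=1}^d |y_i - a|$. The argument rests on two ingredients: first, that $F$ is submodular jointly in $(y_1,\ldots,y_d,a)$; and second, that minimizing a submodular function over one argument leaves a submodular function of the remaining ones (a special case of Topkis's theorem). In our setting both reduce to elementary manipulations of absolute values, so I would prove them by hand rather than quote the general theory.

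For the first ingredient, the key elementary fact is that the absolute value is submodular on $\mathbb{R}^2$: if $u_1 \le u_2$ and $v_1 \le v_2$, then
\[
|u_1 - v_1| + |u_2 - v_2| \;\le\; |u_1 - v_2| + |u_2 - v_1|.
\]
I would prove this by observing that the pair $(u_1 - v_2,\, u_2 - v_1)$ majorizes the pair $(u_1 - v_1,\, u_2 - v_2)$ — the two pairs have the same sum, and $u_1 - v_2$ is $\le$ each of $u_1 - v_1$ and $u_2 - v_2$ — so the inequality follows from convexity of $t \mapsto |t|$ (or one simply checks the handful of sign cases). Since $F(y,a) = \sum_i |y_i - a|$ is a sum of terms, each depending only on the pair $(y_i,a)$ and submodular in that pair, and separable (hence modular) in the $y_i$'s among themselves, $F$ is submodular on $\mathbb{R}^{d+1}$.

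For the second ingredient I would argue directly, avoiding any appeal to Topkis. Given $y,z \in \mathbb{R}^d$, choose $a^*$ and $b^*$ attaining the minima defining $C(y)$ and $C(z)$; these exist because $F(y,\cdot)$ is convex, piecewise linear, and coercive, and in fact $a^*$ may be taken to be a median of $y_1,\ldots,y_d$, consistent with Proposition~\ref{old C formula}. Assume without loss of generality that $a^* \le b^*$. Then
\[
C(\min(y,z)) + C(\max(y,z)) \;\le\; F(\min(y,z),\, a^*) + F(\max(y,z),\, b^*) \;=\; \sum_{i=1}^d \Big( \big|\min\{y_i,z_i\} - a^*\big| + \big|\max\{y_i,z_i\} - b^*\big| \Big).
\]
It then suffices to verify the termwise bound $\big|\min\{y_i,z_i\} - a^*\big| + \big|\max\{y_i,z_i\} - b^*\big| \le |y_i - a^*| + |z_i - b^*|$: when $y_i \le z_i$ it is an equality, and when $y_i > z_i$ it is exactly the absolute-value submodularity above, applied to $z_i \le y_i$ and $a^* \le b^*$. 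Summing over $i$ gives $C(\min(y,z)) + C(\max(y,z)) \le F(y,a^*) + F(z,b^*) = C(y) + C(z)$, which is~\eqref{Monge}.

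I do not expect a serious obstacle: the proof is short. The only points needing care are the elementary submodularity lemma for $|\cdot|$ (picking the cleanest among the majorization and casework proofs) and the bookkeeping in the last step — one must feed $a^* = \min(a^*,b^*)$ into $F(\min(y,z),\cdot)$ and $b^* = \max(a^*,b^*)$ into $F(\max(y,z),\cdot)$, in that pairing, and then confirm the termwise comparison covers both orderings of $y_i$ and $z_i$. A more computational alternative would be to use the reduction to adjacent two-dimensional subarrays recorded just before the proposition together with the explicit formula~\eqref{C via epsilon} or~\eqref{C via j}, but the parametric-minimization argument above seems shorter and more transparent.
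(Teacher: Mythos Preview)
Your argument is correct and takes a genuinely different route from the paper's. The paper proceeds by reducing to the adjacent two-dimensional condition~\eqref{Monge condition} and then invoking the explicit order-statistic formula~\eqref{C via epsilon}: it computes the effect on $C$ of incrementing a single coordinate $y_i$ by~$1$, shows this effect equals $\epsilon\big(\max\{k: y_i = y_{(k)}\}\big)$, and concludes from the monotonicity of~$\epsilon$. Your approach instead works directly with the variational definition $C(y) = \min_a \sum_i |y_i - a|$ and never touches the order-statistic formula or the reduction to adjacent subarrays; it is essentially the observation that $C$ is an infimal projection of a function submodular on $\mathbb{R}^{d+1}$, with the Topkis-type preservation step done by hand via the termwise comparison. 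Your version is shorter and more robust---it extends verbatim to $C(y) = \min_a \sum_i \phi(y_i - a)$ for any convex $\phi$---while the paper's version has the virtue of reusing Proposition~\ref{prop:C}, which is needed elsewhere anyway, and of illustrating concretely how the sign pattern~$\epsilon$ governs the cost.
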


In~\cite{Erickson20}*{\S8} we resorted to an extensive case-by-case proof of this proposition.
We provide a much shorter proof below, which uses our new Proposition~\ref{prop:C}.

\begin{proof}
    
    We must show that the inequality~\eqref{Monge condition} holds in the special case where $A$ is the cost array $C$ defined in~\eqref{C integral}.
    Let $y = (y_1, \ldots, y_d) \in [n+1]^d$.
    Choose two distinct indices $i<j$ in $[d]$, to play the role of the pair of variable coordinates in~\eqref{Monge condition}.
    Let $y_{_{+i}}$ (resp., $y_{_{+j}}$) denote the vector obtained from $y$ by adding $1$ to the $i$th (resp., $j$th) coordinate; likewise, let $y_{_{+ij}}$ denote the result of adding 1 to both the $i$th and $j$th coordinates.
    Rearranging the inequality~\eqref{Monge condition}, we must show that
    \begin{equation}
        \label{shorter inequality}
        \underbrace{C(y_{_{+i}}) - C(y)}_{\text{LHS}} \geq \underbrace{C(y_{_{+ij}}) - C(y_{_{+j}})}_{\text{RHS}}.
    \end{equation}
    Recalling from~\eqref{C via epsilon} that
    \[
    C(y) = \sum_{i=1}^d \epsilon(i) \: y_{(i)},
    \]
    where $\epsilon$ is the function defined in~\eqref{epsilon}, we observe in~\eqref{shorter inequality} that LHS (resp., RHS) depends only on the relative position of $y_i$ among the order statistics of the coordinates in $y$ (resp., in $y_{_{+j}}$).
    In particular, incrementing $y_i$ by 1 has the following effect on $C(y)$:
    \begin{itemize}
        \item decreases $C(y)$ by 1 if $\max\{k: y_i = y_{(k)} \} < (d+1)/2$;
        \item has no effect on $C(y)$ if $\max\{k: y_i = y_{(k)} \} = (d+1)/2$;
        \item increases $C(y)$ by 1 if $\max\{k: y_i = y_{(k)} \} > (d+1)/2$.
    \end{itemize}
    Thus LHS equals either $-1$, $0$, or $1$, respectively.
    The situation is identical for RHS, where incrementing $(y_{_{+j}})_i = y_i$ by 1 produces the same three effects listed above, upon replacing $y_{(k)}$ by $(y_{_{+j}})_{(k)}$.
    This can all be summarized via the $\epsilon$ function as follows:
    \begin{align}
        \label{LHS RHS}
    \begin{split}
    \text{LHS} &= \epsilon\Big(\max\big\{k: y_i = y_{(k)} \big\} \Big),\\
    \text{RHS} &= \epsilon\Big(\max\big\{k: y_i = (y_{_{+j}})_{(k)} \big\} \Big).
    \end{split}
    \end{align}
    Now, since the relative order of $y_i$ among the coordinates of $y$ must be at least as great as its relative order among the coordinates of $y_{_{+j}}$, we must have 
    \begin{equation}
        \label{arguments}
        \max\big\{k: y_i = y_{(k)} \big\} \geq \max\big\{k: y_i = (y_{_{+j}})_{(k)} \big\}.
    \end{equation}
    Since the function $\epsilon$ is weakly increasing, it follows from~\eqref{LHS RHS} and~\eqref{arguments} that $\text{LHS} \geq \text{RHS}$.
    This establishes~\eqref{shorter inequality} as desired, and completes the proof.
\end{proof}

In terms of writing down an explicit formula for $\EMD(\x)$, the Mongeness of our array $C$ in~\eqref{C integral} is significant for the following reason.
Bein et.~al~\cite{Bein} give an algorithm they call GREEDY$_{\!d}$, to construct a transport plan $T_\x$ for the generalized transportation problem~\eqref{transportation problem}.
(This algorithm is the natural $d$-dimensional generalization of the well-known ``northwest corner rule''~\cite{Hoffman}; see Remark~\ref{rem:NW}.)
They then show that GREEDY$_{\!d}$ produces an optimal transport plan $T^*_\x$ if and only if the cost array $C$ in~\eqref{transportation problem} has the Monge property~\cite{Bein}*{Thm.~4.1}.
We give this algorithm GREEDY$_{\!d}$ below:

\noindent \textbf{Input:} A tuple $\x = (x^1, \ldots, x^d)$ of distributions.\\
\textbf{Output:} An optimal transport plan $T^*_\x$ (assuming that $C$ in~\eqref{transportation problem} has the Monge property).

\begin{enumerate}
\setcounter{enumi}{-1}
    \item \label{step:initialize} Initialize $(y_1, \ldots, y_d) = (1,\ldots,1)$.
    \item \label{step:NW} Set $T^*_\x(y_1, \ldots, y_d)$ equal to $\min\{x^1_{y_1}, \ldots, x^d_{y_d}\}$.
    \item \label{step:modify} Modify one coordinate in each distribution $x^i$ by redefining $x^i_{y_i} \coloneqq x^i_{y_i} - \min \{x^1_{y_1}, \ldots, x^d_{y_d}\}$.
    \item \label{step:zeros} Now there is at least one index $i$ such that $x^i_{y_i} = 0$.
    For each such $i$, set all remaining entries
    \[
   T^*_\x(z) \text{ such that $z_i = y_i$ and all other $z_k \geq y_k$}
    \]
    equal to 0, and increment $y_i$ by 1.
    \item \label{step:repeat} Repeat Steps~\ref{step:NW}--\ref{step:zeros} until all entries of $T^*_\x$ have been filled in.
\end{enumerate}

\begin{rem}
\label{rem:NW}
    The reader may find it helpful to visualize the algorithm above in the original case ($d=2$), where it is known as the ``northwest corner rule.''
    In this case, the $T^*_\x$ constructed by the algorithm is an $(n+1) \times (n+1)$ contingency table whose row and column sums are given by the two distributions $x^1$ and $x^2$, respectively.
    Steps~\ref{step:initialize} and~\ref{step:NW} tell us to begin, quite literally, in the northwest corner of the matrix, and set that entry equal to $\min\{x^1_1, x^2_1\}$.
    After subtracting this value from the first coordinate of both $x^1$ and $x^2$ in Step~\ref{step:modify}, we are directed in Step~\ref{step:zeros} to ``zero out'' the rest of the first row (if $x^1_1$ is now 0) and/or the first column (if $x^2_1$ is now 0).
    We then accordingly move south and/or east by one entry, which takes us back to Step~\ref{step:NW}, namely the northwest corner of the remaining submatrix.
    With each return to Step~\ref{step:NW}, the new northwest corner moves south and/or east, and the remaining submatrix contains one less row and/or column, until we finally reach the southeast corner of the matrix, at which point the algorithm terminates.
    
\end{rem}

It is straightforward to verify that the output $T^*_\x$ of the algorithm GREEDY$_{\!d}$ can be described explicitly as follows.
For each $y \in [n+1]^d$, define the interval
\begin{equation}
    \label{I(y)}
    I(y) \coloneqq \bigcap_{i=1}^d \: [X^i_{y_i-1}, \: X^i_{y_i})
\end{equation}
where for convenience we set $X^i_0 \coloneqq 0$.
Then 
\begin{equation}
    \label{Tx(y) formula}
    T^*_{\x}(y) = \text{length of $I(y)$}.
\end{equation}
To verify~\eqref{Tx(y) formula} in the initial iteration of GREEDY$_{\!d}$ where $y = (1,\ldots,1)$, we have
\begin{align*}
    T^*_\x(1, \ldots, 1) &= \text{length of } [X^1_0, X^1_1) \cap \cdots \cap [X^d_0, X^d_1) \\
    &= \text{length of } [0, x^1_1) \cap \cdots \cap [0, x^d_1)\\
    &= \text{length of } [0, \min\{x^1_1, \ldots, x^d_1\}) \\
    & = \min\{x^1_1, \ldots, x^d_1\},
\end{align*}
which agrees with Step~\ref{step:NW} of the algorithm.
Next, let $i$ be such that $x^i_1 = \min \{x^1_1, \ldots, x^d_1\}$.
Then for every subsequent ${z \neq (1, \ldots, 1)}$ such that $z_i = 1$, there is some $k$ such that $z_k \geq 2$, and thus $[X^i_0, X^i_1) \cap (X^k_{y_k - 1}, X^k_{y_k}) = \varnothing$, since $x^i_1 \leq x^k_1$ implies that $X^i_1 \leq X^k_{1} \leq X^k_{2} \leq \cdots \leq X^k_{n}$.
Hence every subsequent entry $T^*_{\x}(z)$ in the hyperplane $z_i = 1$ is automatically 0, which agrees with Steps~\ref{step:modify}--\ref{step:zeros}.
This argument verifying~\eqref{Tx(y) formula} now holds recursively for each successive $y$.

\begin{prop}
    \label{prop:EMD equals C}
    Let $\x \in (\mathcal{P}_n)^d$, with $\mathbf{X}^\bullet_j$ as in~\eqref{X notation}.
    We have
    \[
    \EMD(\x) = \sum_{j=1}^{n} C(\mathbf{X}^\bullet_j).
    \]
\end{prop}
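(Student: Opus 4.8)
The plan is to feed the explicit optimal transport plan from~\eqref{I(y)}--\eqref{Tx(y) formula} into the definition of $\EMD(\x)$, and then reorganize the resulting sum by ``slicing'' the unit interval. Since the cost array $C$ has the Monge property (Proposition~\ref{prop:C is Monge}), the algorithm GREEDY$_{\!d}$ returns an optimal transport plan, so $\EMD(\x) = \sum_{y \in [n+1]^d} C(y)\, T^*_\x(y)$, where $T^*_\x(y)$ is the length of the interval $I(y) = \bigcap_{i=1}^d [X^i_{y_i-1}, X^i_{y_i})$. For each fixed $i$, the half-open intervals $[X^i_{y_i-1}, X^i_{y_i})$ with $y_i$ ranging over $[n+1]$ partition $[0,1)$ (recall $X^i_0 = 0$ and $X^i_{n+1} = 1$), hence the intervals $I(y)$ also partition $[0,1)$; concretely, each $t \in [0,1)$ belongs to $I(y(t))$ for the unique tuple $y(t)$ with $y_i(t) \coloneqq \min\{j : X^i_j > t\}$.

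First I would substitute the restricted-domain formula $C(y) = \sum_{j=1}^n \wt(\#\{i : y_i > j\})$ from~\eqref{C via j} and interchange the order of summation, obtaining
\[
\EMD(\x) = \sum_{j=1}^n \ \sum_{y \in [n+1]^d} \wt\big(\#\{i : y_i > j\}\big)\, T^*_\x(y).
\]
Since $\#\{i : y_i > j\}$ is constant on each $I(y)$ and the $I(y)$ partition $[0,1)$, the inner sum over $y$ equals $\int_0^1 \wt\big(\#\{i : y_i(t) > j\}\big)\, dt$. The key identity is that, for every $t$ and every $j \in [n]$, we have $y_i(t) > j \iff X^i_j \le t$, which holds because $j \mapsto X^i_j$ is nondecreasing and $y_i(t)$ is by definition the least index with $X^i_{y_i(t)} > t$. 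Hence $\#\{i : y_i(t) > j\} = \#\{i : X^i_j \le t\}$.

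It then remains to evaluate $\int_0^1 \wt(\#\{i : X^i_j \le t\})\, dt$ for each fixed $j$, which is pure bookkeeping. Writing $X^{(1)}_j \le \cdots \le X^{(d)}_j$ for the order statistics of $\mathbf{X}^\bullet_j$, the integrand takes the value $m$ on an interval of length $\Delta X^{(m)}_j$ when $1 \le m \le d-1$, the value $0$ on $[0, X^{(1)}_j)$, and the value $d$ on $[X^{(d)}_j, 1)$; since $\wt(0) = \wt(d) = 0$, the integral collapses to $\sum_{m=1}^{d-1} \wt(m)\, \Delta X^{(m)}_j$, which is exactly $C(\mathbf{X}^\bullet_j)$ by formula~\eqref{C via wt} applied to the point $\mathbf{X}^\bullet_j \in \mathbb{R}^d$. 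Summing over $j \in [n]$ then yields the stated identity. I expect the only delicate point to be keeping the slicing picture honest — verifying that the $I(y)$ really do tile $[0,1)$, and that the weighted sum of their lengths is the advertised integral, while tracking the boundary cases $m = 0$ and $m = d$ so that the cancellation from $\wt(0) = \wt(d) = 0$ is transparent; there is no analytic content beyond that.
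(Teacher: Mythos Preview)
Your proposal is correct and follows essentially the same approach as the paper: both start from the explicit GREEDY$_{\!d}$ plan~\eqref{Tx(y) formula}, use the partition of $[0,1)$ by the intervals $I(y)$ to rewrite the cost sum as an integral, invoke the key identity $y_i(t) > j \iff X^i_j \le t$, and then read off $\sum_i \wt(i)\,\Delta X^{(i)}_j = C(\mathbf{X}^\bullet_j)$ via~\eqref{C via wt}. The only cosmetic differences are that the paper introduces boxcar functions to formalize the step-function computation and converts to an integral before swapping the order of summation, whereas you swap first and describe the step function directly.
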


\begin{proof}
    Since $C$ is Monge by Proposition~\ref{prop:C is Monge}, we have $\EMD(\x) = \sum_y C(y) T^*_{\x}(y)$ where $T^*_{\x}$ is the output of the GREEDY$_{\!d}$ algorithm.
    Hence by~\eqref{Tx(y) formula}, we have
    \begin{equation}
    \label{EMD CI}
    \EMD(\x) = \sum_{\mathclap{y \in [n+1]^d}} C(y) \cdot \text{length of $I(y)$}.
    \end{equation}
Observe from~\eqref{I(y)} that
\begin{equation}
    \label{disjoint union}
    \bigcup_{y \in [n+1]^d} I(y) = [0,1),
\end{equation}
and this union is pairwise disjoint.
Hence for each $t \in [0,1)$, we can set
\[
    y(t) \coloneqq \text{the unique $y$ such that $t \in I(y)$},
\]
whose $i$th component $y_i(t)$ is given by
\begin{equation*}
\label{yi(t)}
    y_i(t) = \#\Big\{ 0 \leq k \leq n : X^i_k \leq t \Big\}.
\end{equation*}
It follows immediately that $y_i(t) > j$ if and only if $X^i_j \leq t$.
Therefore for each $1 \leq j \leq n$, we have 
\begin{equation}
    \label{y's and X's}
    \#\{i: y_i(t) > j\} = \#\{ i : X^i_j \leq t\}.
\end{equation}
To write the right-hand side of~\eqref{y's and X's} as a function of $t \in [0,1)$, it is convenient to use the boxcar function
\[
\Pi_{a,b}(t) \coloneqq \begin{cases}
    1, & t \in [a,b),\\
    0 & \text{otherwise},
\end{cases}
\]
so that we have
\begin{equation}
    \label{boxcar}
    \#\{ i : X^i_j \leq t\} = \sum_{i=1}^d \Pi_{X^i_j, 1}(t) = \sum_{i=1}^{d-1} i \cdot \Pi_{X^{(i)}_j, X^{(i+1)}_j}(t).
\end{equation}
Note on the right-hand side of~\eqref{boxcar} that the supports of the boxcar functions are pairwise disjoint and are contained in $[0,1)$; therefore when composing with some function $\varphi$ and integrating on $[0,1)$, we will have
\begin{equation}
    \label{compose integral}
    \int_0^1 \varphi\left( \#\{i: X^i_j \leq t \} \right) \: dt = 
    \sum_{i=1}^{d-1} \int_{X^{(i)}_j}^{X^{(i+1)}_j} \varphi(i) \: dt.
\end{equation}
Now we can rewrite~\eqref{EMD CI} as
\begin{align*}
    \EMD(\x) &= \sum_{y \in [n+1]^d} \int_{I(y)} C(y) \: dt \\
    &= \int_0^1 C(y(t)) \: dt && \text{by~\eqref{disjoint union}}\\
    &= \int_0^1 \left(\sum_{j=1}^{n} \wt\big(\#\{i: y_i(t) > j\} \big) \right) \: dt && \text{by~\eqref{C via j}}\\
    &= \sum_{j=1}^{n} \int_0^1 \wt\big(\#\{i: X^i_j \leq t \} \big) \: dt && \text{by~\eqref{y's and X's}} \\
    &= \sum_{j=1}^n \sum_{i=1}^{d-1} \int_{X^{(i)}_j}^{X^{(i+1)}_j} \wt(i) \: dt && \text{by~\eqref{compose integral}}\\
    &= \sum_{j=1}^n \sum_{i=1}^{d-1} \wt(i) \cdot \left(X^{(i+1)}_j -X^{(i)}_j \right) && \text{integrating a constant} \\
    &= \sum_{j=1}^n \sum_{i=1}^{d-1} \wt(i) \cdot \Delta X_j^{(i)} && \text{by~\eqref{X notation}}\\
    &= \sum_{j=1}^n C(\mathbf{X}^\bullet_j) && \text{by~\eqref{C via wt}.} \qedhere
\end{align*}
\end{proof}

\section{Main result: expected value}

Before stating our main result, we first obtain an explicit formula for the CDF of the random variable $X_j$ from~\eqref{X notation} and~\eqref{Xj sim Beta flat}.
We record this CDF, which we call $F_j(z)$, in the following lemma.

\begin{lemma}
\label{lemma:Fjz}
    Let $x \in \mathcal{P}_n$ be chosen uniformly at random, and let $F_j(z)$ denote the CDF of the random variable $X_j$.
    We have
    \[
    F_j(z) = \sum_{m=j}^n (-1)^{m-j} \binom{n}{m} \binom{m-1}{j-1} z^m, \qquad \text{for $z \in [0,1]$}.
    \]
\end{lemma}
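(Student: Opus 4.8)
The plan is to compute the CDF of $X_j \sim \mathrm{Beta}(j,\, n-j+1)$ directly from the density in~\eqref{beta}. Since $\Gamma(j)\Gamma(n-j+1) = (j-1)!\,(n-j)!$ and $\Gamma(n+1) = n!$, the PDF of $X_j$ is $\dfrac{n!}{(j-1)!\,(n-j)!}\, z^{j-1}(1-z)^{n-j}$, so that $F_j(z) = \dfrac{n!}{(j-1)!\,(n-j)!}\displaystyle\int_0^z t^{j-1}(1-t)^{n-j}\,dt$. First I would expand $(1-t)^{n-j} = \sum_{\ell=0}^{n-j}\binom{n-j}{\ell}(-1)^\ell t^\ell$ by the binomial theorem, integrate term by term to get $\int_0^z t^{j-1+\ell}\,dt = \dfrac{z^{j+\ell}}{j+\ell}$, and then reindex with $m = j+\ell$. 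This yields
\[
F_j(z) = \frac{n!}{(j-1)!\,(n-j)!}\sum_{m=j}^{n} \frac{(-1)^{m-j}}{m}\binom{n-j}{m-j}\, z^m.
\]
The remaining task is purely algebraic: to show that $\dfrac{n!}{(j-1)!\,(n-j)!}\cdot\dfrac{1}{m}\binom{n-j}{m-j} = \binom{n}{m}\binom{m-1}{j-1}$ for each $m$ in the range.

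The main (and only real) obstacle is this binomial identity, and it is routine. I would verify it by writing everything in terms of factorials: the left side is $\dfrac{n!}{(j-1)!\,(n-j)!}\cdot\dfrac{1}{m}\cdot\dfrac{(n-j)!}{(m-j)!\,(n-m)!} = \dfrac{n!}{m\,(j-1)!\,(m-j)!\,(n-m)!}$, while the right side is $\dfrac{n!}{m!\,(n-m)!}\cdot\dfrac{(m-1)!}{(j-1)!\,(m-j)!} = \dfrac{n!\,(m-1)!}{m!\,(n-m)!\,(j-1)!\,(m-j)!}$, and the two agree since $m! = m\cdot(m-1)!$. This completes the proof. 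One should also check the endpoints $F_j(0)=0$ (immediate, since each term has $z^m$ with $m\ge j\ge 1$) and $F_j(1)=1$; the latter is not needed for the statement but serves as a sanity check, and follows from the fact that the normalizing constant in~\eqref{beta} is exactly $1/\!\int_0^1 t^{j-1}(1-t)^{n-j}\,dt$.

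As an alternative I could instead invoke the standard closed form for the regularized incomplete beta function $I_z(a,b)$ with integer parameters, namely $I_z(j,\,n-j+1) = \sum_{m=j}^{n}\binom{n}{m}z^m(1-z)^{n-m}$, and then expand $(1-z)^{n-m} = \sum_k \binom{n-m}{k}(-1)^k z^k$ and collect powers of $z$; the coefficient of $z^M$ becomes $\sum_{m=j}^{M}(-1)^{M-m}\binom{n}{m}\binom{n-m}{M-m}$, which one simplifies to $(-1)^{M-j}\binom{n}{M}\binom{M-1}{j-1}$ via a Vandermonde-type manipulation. This route is marginally slicker but requires citing the incomplete-beta identity, whereas the direct integration above is self-contained given only~\eqref{beta} and~\eqref{Xj sim Beta flat}; for that reason I would present the direct computation.
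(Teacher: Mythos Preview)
Your proposal is correct and follows essentially the same route as the paper's own proof: both start from $X_j \sim \mathrm{Beta}(j,\,n-j+1)$, write the PDF via~\eqref{beta}, expand $(1-t)^{n-j}$ by the binomial theorem, integrate term by term, reindex with $m = j+\ell$, and then verify the binomial identity $\tfrac{n!}{(j-1)!\,(n-j)!}\cdot\tfrac{1}{m}\binom{n-j}{m-j} = \binom{n}{m}\binom{m-1}{j-1}$ by writing out factorials. The alternative you mention (via the closed form of the regularized incomplete beta function) is not used in the paper, and your reason for preferring the direct computation matches the paper's self-contained presentation.
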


\begin{proof}
    We have
    \begin{align*}
        F_j(z) & \coloneqq
        \int_0^z \text{PDF of ${\rm Beta}(j, \: n-j+1)$} \: dt && \text{by~\eqref{Xj sim Beta flat}} \\
        &= \int_0^z 
        \frac{\Gamma(n+1)}{\Gamma(j)\Gamma(n-j+1)} \: t^{j-1} (1-t)^{n-j} \: dt && \text{by~\eqref{beta}}\\
        &= \int_0^z \frac{\Gamma(n+1)}{\Gamma(j)\Gamma(n-j+1)} \sum_{\ell = 0}^{n-j} (-1)^\ell \binom{n-j}{\ell} t^{\ell + j - 1} \: dt\\
        &= \sum_{\ell = 0}^{n-j} \frac{\Gamma(n+1)}{\Gamma(j)\Gamma(n-j+1)} (-1)^\ell \binom{n-j}{\ell} \frac{z^{\ell + j}}{\ell+j}.
    \end{align*}
    Since $n$ and $j$ are positive integers, we can replace the gamma functions by factorials; then upon re-indexing the sum via $m \coloneqq \ell + j$, we have
    \begin{align*}
        F_j(z) &= \sum_{m=j}^n (-1)^{m-j} \frac{n!}{(j-1)!(n-j)!} \binom{n-j}{m-j}\frac{z^m}{m}\\
        &= \sum_{m=j}^n (-1)^{m-j} \frac{n!}{(j-1)!(n-j)!} \frac{(n-j)!}{(m-j)!(n-m)!} \frac{(m-1)!}{m!} \: z^m \\
        &= \sum_{m=j}^n (-1)^{m-j} \underbrace{\frac{n!}{(n-m)!} \frac{1}{m!}}_{\binom{n}{m}}\underbrace{\frac{(m-1)!}{1} \frac{1}{(j-1)!(m-j)!}}_{\binom{m-1}{j-1}} \: z^m. \qedhere
    \end{align*}
\end{proof}

Our main result is the following formula for the expected value of the EMD on $(\mathcal{P}_n)^d$.

\begin{theorem}
    \label{thm:expected value}
    Let $\x \in (\mathcal{P}_n)^d$ be chosen uniformly at random.
    The expected value of the generalized EMD is given by
    \[
    \mathbb{E}\Big[ \EMD(\x) \Big] = \int_0^1 \sum_{j=1}^n \sum_{k=1}^{d-1} \wt(k) \binom{d}{k} F_j(z)^k \Big(1-F_j(z)\Big)^{d-k} \: dz,
    \]
    where $\wt(k) \coloneqq \min\{k, \: d-k\}$ is the Lee weight, and $F_j(z)$ is the polynomial given in Lemma~\ref{lemma:Fjz}.
\end{theorem}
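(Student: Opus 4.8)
The plan is to reduce the claim to a routine expectation computation by taking expectations in Proposition~\ref{prop:EMD equals C}, but starting not from its final statement but from an intermediate identity established inside its proof. Specifically, one line of that computation gives, for \emph{every} $\x \in (\mathcal{P}_n)^d$,
\[
\EMD(\x) = \sum_{j=1}^n \int_0^1 \wt\big(\#\{i : X^i_j \le t\}\big)\,dt.
\]
This is the identity I would take as the starting point.

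Next I would use the probabilistic structure. When $\x = (x^1,\ldots,x^d)$ is uniform on $(\mathcal{P}_n)^d$, the distributions $x^1,\ldots,x^d$ are independent, each uniform on $\mathcal{P}_n$; hence for each fixed $j$ the cumulative values $X^1_j,\ldots,X^d_j$ form an i.i.d.\ sample from the random variable $X_j \sim {\rm Beta}(j,\,n-j+1)$ of~\eqref{Xj sim Beta flat}, whose CDF is $F_j$. Therefore, for fixed $j$ and fixed $t \in [0,1)$, the events $\{X^i_j \le t\}$ are independent, each of probability $F_j(t)$, so the count $N_j(t) \coloneqq \#\{i : X^i_j \le t\}$ is a binomial random variable with parameters $d$ and $F_j(t)$.

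Then I would take expectations. Since the integrand satisfies $0 \le \wt(N_j(t)) \le \lfloor d/2\rfloor$, Tonelli's theorem lets me interchange $\mathbb{E}$ with $\sum_j$ and $\int_0^1$, so that
\[
\mathbb{E}\big[\EMD(\x)\big] = \sum_{j=1}^n \int_0^1 \mathbb{E}\big[\wt(N_j(t))\big]\,dt,
\]
and the inner expectation is read off directly from the binomial PMF:
\[
\mathbb{E}\big[\wt(N_j(t))\big] = \sum_{k=0}^d \wt(k)\binom{d}{k} F_j(t)^k \big(1-F_j(t)\big)^{d-k} = \sum_{k=1}^{d-1} \wt(k)\binom{d}{k} F_j(t)^k \big(1-F_j(t)\big)^{d-k},
\]
the $k=0$ and $k=d$ terms dropping out because $\wt(0)=\wt(d)=0$. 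Substituting back, renaming $t$ to $z$, and recalling that $F_j(z)$ is the polynomial of Lemma~\ref{lemma:Fjz}, gives exactly the claimed formula.

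There is no genuine analytic obstacle here: the bounded nonnegative integrand makes every interchange of sum, integral, and expectation routine. The one point needing care is the very first step — one must extract the integral representation from inside the proof of Proposition~\ref{prop:EMD equals C} rather than quote its final statement, since the latter involves $C$ evaluated at the real (non-integer) point $\mathbf{X}^\bullet_j$, so the discrete formula~\eqref{C via j} does not apply to it directly. An alternative route that avoids this: keep $C(\mathbf{X}^\bullet_j)$ and use the order-statistic form~\eqref{C via wt} to write $\mathbb{E}[C(\mathbf{X}^\bullet_j)] = \sum_{i=1}^{d-1}\wt(i)\big(\mathbb{E}[X^{(i+1)}_j]-\mathbb{E}[X^{(i)}_j]\big)$, express each $\mathbb{E}[X^{(i)}_j] = \int_0^1\big(1-F_{X^{(i)}_j}(z)\big)\,dz$ by the tail formula for the mean of a nonnegative bounded variable, insert~\eqref{F order stat}, and observe that $F_{X^{(i)}_j}(z)-F_{X^{(i+1)}_j}(z)$ telescopes to the single term $\binom{d}{i}F_j(z)^i(1-F_j(z))^{d-i}$; summing over $j$ and reindexing yields the same answer.
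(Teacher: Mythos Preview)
Your proposal is correct. Your primary route --- recognizing $N_j(t)=\#\{i:X^i_j\le t\}$ as a binomial variable with success probability $F_j(t)$ and then reading off $\mathbb{E}[\wt(N_j(t))]$ from the PMF --- is genuinely different from the paper's argument and noticeably more direct: it bypasses order statistics entirely and lands on the answer in one step after Tonelli. The paper instead works with the representation $C(\mathbf{X}^\bullet_j)=\sum_i \epsilon(i)\,X^{(i)}_j$ from~\eqref{C via epsilon}, expresses each $\mathbb{E}[X^{(i)}_j]$ via the tail formula using the order-statistic CDF~\eqref{F order stat}, and then converts the resulting double sum $\sum_i\sum_{k\ge i}\epsilon(i)$ into $-\sum_k \wt(k)$ using~\eqref{wt and epsilon} together with $\sum_i\epsilon(i)=0$. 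Your alternative route at the end is essentially this same argument, only organized around~\eqref{C via wt} and a telescoping of $F_{X^{(i)}_j}-F_{X^{(i+1)}_j}$ rather than around~\eqref{C via epsilon} and a reindexing; the two are Abel-summation duals of one another. What your binomial approach buys is brevity and conceptual clarity (no order-statistic CDF needed); what the paper's approach buys is that it quotes only the \emph{statement} of Proposition~\ref{prop:EMD equals C} rather than an intermediate line of its proof.
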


\begin{proof}
    We have
    \begin{align}
        \mathbb{E} \Big[\EMD(\x)\Big] &= \mathbb{E} \left[\sum_{j=1}^n C(\mathbf{X}^\bullet_j) \right] && \text{by Proposition~\ref{prop:EMD equals C}} \nonumber \\
        &= \sum_{j=1}^n \mathbb{E}[C(\mathbf{X}^\bullet_j)] \nonumber \\
        &= \sum_{j=1}^n \mathbb{E}\left[ \sum_{i=1}^d \epsilon(i) X^{(i)}_j \right] && \text{by~\eqref{C via epsilon}} \nonumber \\
        &= \sum_{j=1}^n \sum_{i=1}^d \epsilon(i) \cdot \mathbb{E}\left[X^{(i)}_j\right] \label{last line proof}.
    \end{align}
    Since $X^{(i)}_j$ is the $i$th order statistic on $X_j$, it follows from~\eqref{F order stat} that
    \begin{equation}
        \label{Fijz}
        \text{CDF of $X^{(i)}_j$} \eqcolon F_j^{(i)}(z) = \sum_{k=i}^d \binom{d}{k} F_j(z)^k (1-F_j(z))^{d-k},
    \end{equation}
    where $F_j(z)$ is the CDF of the random variable $X_j$, as given in Lemma~\ref{lemma:Fjz}.
    Therefore we obtain
    \begin{align*}
        \mathbb{E}\left[X^{(i)}_j\right] = \int_0^1 1- F^{(i)}_j(z) \: dz 
        &= 1 - \int_0^1 F_j^{(i)}(z) \: dz\\
        &= 1 - \int_0^1 \sum_{k=i}^d \binom{d}{k} F_j(z)^k (1-F_j(x))^{d-k} \; dz,
    \end{align*}
    where the second line follows directly from~\eqref{Fijz}.
    Substituting this in~\eqref{last line proof}, we have
    \begin{align}
    \mathbb{E} \Big[ \EMD(\x) \Big] &= \sum_{j=1}^n \sum_{i=1}^d \epsilon(i) \cdot \left(1 - \int_0^1 \sum_{k=i}^d \binom{d}{k} F_j(z)^k (1-F_j(x))^{d-k} \; dz\right) \nonumber\\
    &= \sum_{j=1}^n \left(\sum_{i=1}^d \epsilon(i) - \int_0^1 \sum_{i=1}^d \sum_{k=i}^d \epsilon(i) \binom{d}{k} F_j(z)^k (1-F_j(x))^{d-k} \; dz \right). \label{last line proof 2}
    \end{align}
    Since we have $\sum_{i=1}^d \epsilon(i) = 0$ by~\eqref{epsilon}, the first sum in the large parentheses vanishes in~\eqref{last line proof 2}; moreover, we have
    \[
    \sum_{i=1}^d \sum_{k=i}^d \epsilon(i) = \sum_{k=1}^d \sum_{i=1}^k \epsilon(i) = - \sum_{k=1}^{d-1} \wt(k),
    \]
    where the second equality follows from~\eqref{wt and epsilon}.
    Applying these two observations to~\eqref{last line proof 2}, we obtain the formula stated in the theorem.
\end{proof}

\begin{rem}
As a sanity check, one can quickly use Theorem~\ref{thm:expected value} to recover the table of expected values we included in our previous paper~\cite{Erickson20}*{Table~2}.
First note, however, that the parameter $n$ in that paper corresponds to $n-1$ in the present paper; also, the table mentioned above gives the \emph{unit normalized} expected values.
Hence, in order to recover Table 2 exactly as it is printed in~\cite{Erickson20}, we must decrease $n$ by 1 before using Theorem~\ref{thm:expected value}, and then divide by $n \lfloor d/2 \rfloor$.
As an example, let $d=10$ and $n=9$; the corresponding entry in~\cite{Erickson20}*{Table~2} is 0.1975 (rounded to four decimal places).
To recover this value, we evaluate our new formula in Theorem~\ref{thm:expected value} at $d=10$ and $n=9-1=8$, which yields approximately 7.9002814.
Upon unit normalizing by dividing by $n \lfloor d/2 \rfloor = 8 \cdot 5 = 40$, we indeed recover $0.1975$.

We should also emphasize that in our original treatment~\cite{Erickson20}, Figure 2 (a plot of expected EMD values for $n=3$ and for $d = 2, \ldots, 100$) had to be generated experimentally, since the runtime of the recursion~\eqref{recursion} was prohibitive at large values of $d$.
Using the formula in Theorem~\ref{thm:expected value}, however, we can now generate the plot with exact values in just a few seconds.
\end{rem}

\section{A Cayley--Menger type formula for the EMD}
\label{sec:CM}

\subsection{EMD viewed as hypervolume}

In this section, we solve a problem we had previously left open regarding the relationship between the generalized EMD of a $d$-tuple $\x = (x^1, \ldots, x^d)$, on one hand, and the EMDs of the individual pairs $(x^i, x^j)$ on the other hand.
In particular, in~\cite{Erickson20}*{Prop.~5} we observed that in the special case $d=3$, the generalized EMD equals half the sum of the pairwise EMDs.
That is, for $x,y,z \in \mathcal{P}_n$, we showed that
\begin{equation}
    \label{EMD CM d=2}
\EMD(x,y,z) = \frac{\EMD(x,y) + \EMD(x,z) + \EMD(y,z)}{2}.
\end{equation}
For $d>3$, however, there was no such formula solely in terms of the pairwise EMDs, and it was unclear what (if anything) could be said in general.

Motivating this problem is its well-known (and ancient) analogue in Euclidean geometry, namely \emph{Heron's formula} for the area of a triangle $\triangle ABC$ in terms of its side lengths $a,b,c$:
\begin{equation}
    \label{Heron}
    \text{area of }\triangle ABC = \sqrt{s(s-a)(s-b)(s-c)},
\end{equation}
where $s = (a+b+c)/2$ is the semiperimeter.
Indeed, one can view~\eqref{EMD CM d=2} as a sort of Heron's formula for the EMD, expressing the ``area'' between three distributions in terms of the three ``side lengths.''
Notice that for the EMD, unlike the Euclidean setting, the formula~\eqref{EMD CM d=2} suggests that this ``area'' equals exactly the ``semiperimeter.''

In the Euclidean setting, a natural question is whether Heron's formula generalizes to a formula for (hyper)volume in higher dimensions;
in other words, can one express the volume of a $(d-1)$-dimensional simplex $\Delta$ solely in terms of its edge lengths (i.e., the volumes of its 1-dimensional faces)?
The answer to this problem is affirmative, made precise by the~\emph{Cayley--Menger determinant} below (see~\cite{Menger}):
\begin{equation}
    \label{CM}
    (\text{volume of } \Delta)^2 = \frac{(-1)^{d}}{2^{d-1} (d-1)!^2} \cdot \det \! \big[\ell^2_{ij}\big]_{i,j=1}^{d+1},
\end{equation}
where $\ell_{ij}$ is the length of the edge between the $i$th and $j$th vertices (and $\ell_{i,d+1} = \ell_{d+1,i} \coloneqq 1$ for all $i \leq d$, and $\ell_{d+1,d+1} \coloneqq 0$).
Hence the volume can be computed from the edge lengths alone.

By analogy, it makes sense to restate the main problem in this section as follows: \emph{find a Cayley--Menger type formula for the EMD.}
By this, we mean a general formula relating the generalized ($d$-fold) EMD to the pairwise EMDs in a $d$-tuple.
In this analogy, we view $\EMD(\x)$ as the volume of the $(d-1)$-dimensional simplex whose $d$ vertices are given by $\x = (x^1, \ldots, x^d)$.
The edges of this simplex are the pairs $(x^k, x^\ell)$ for all $1 \leq k < \ell \leq d$, and so each pairwise $\EMD(x^k, x^\ell)$ is viewed as an ``edge length.''
Note that the EMD of a pair (i.e., the setting where $d=2$) takes a particularly simple form (following directly from Proposition~\ref{prop:EMD equals C}):
\begin{equation}
    \label{EMD d=2}
    \EMD(x^1, x^2) = \sum_{j=1}^n C(X^1_j, X^2_j) = \sum_{j=1}^n | X^1_j - X^2_j|.
\end{equation}

When the EMD is viewed via the geometric analogy described above, the formula~\eqref{EMD CM d=2} states that the area of a triangle is exactly its semiperimeter.
This is the desired Cayley--Menger type formula for the EMD where $d=3$, and the problem now is to generalize this for all values of $d$.

\subsection{A Cayley--Menger type formula for the EMD}

(The reader may find it helpful to look ahead at Example~\ref{ex:G}, which illustrates the results in this final subsection.)
Let $\x \in (\mathcal{P}_n)^d$, and define the following polynomial in the formal indeterminate $q$:
\begin{equation}
    \label{g}
    G(\mathbf{x}; q) \coloneqq \sum_{i=1}^{d-1} \sum_{j=1}^n \Delta X^{(i)}_j \cdot q^{\wt(i)}.
\end{equation}
Note that the derivative of $G(\x;q)$, when evaluated at $q=1$, recovers the EMD:
    \begin{equation}
    \label{G' = EMD}
        G'(\x;1) = \sum_{j=1}^n \underbrace{\sum_{i=1}^{d-1} \wt(i) \cdot \Delta X^{(i)}_j}_{\substack{C(\mathbf{X}^\bullet_j), \\ 
        \text{by~\eqref{C via wt}}}} = \EMD(\x),
    \end{equation}
    where the second equality follows from Proposition~\ref{prop:EMD equals C}.
    Moreover, it is the \emph{second} derivative of $G(\x;q)$ that plays the key role in our following analogue of the Cayley--Menger formula.

\begin{theorem}[Cayley--Menger type formula for EMD]
\label{thm:CM}
    Let $\x = (x^1, \ldots, x^d) \in (\mathcal{P}_n)^d$.
    We have
    \[
    \EMD(\x) = \frac{1}{d-1}\left[G''(\x;1) + \sum_{\mathclap{1 \leq k < \ell \leq d}} \EMD(x^k, x^\ell)\right].
    \]
\end{theorem}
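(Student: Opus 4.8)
The plan is to compute $G''(\x;1)$ directly from the definition \eqref{g} and show that it equals $(d-1)\EMD(\x) - \sum_{k<\ell}\EMD(x^k,x^\ell)$, after which the stated identity follows by rearranging. Differentiating \eqref{g} twice in $q$ gives $G''(\x;q) = \sum_{i=1}^{d-1}\sum_{j=1}^n \Delta X^{(i)}_j \cdot \wt(i)(\wt(i)-1) q^{\wt(i)-2}$, so that
\[
G''(\x;1) = \sum_{j=1}^n \sum_{i=1}^{d-1} \wt(i)\big(\wt(i)-1\big)\, \Delta X^{(i)}_j.
\]
Comparing with \eqref{G' = EMD}, which says $\EMD(\x) = \sum_j \sum_i \wt(i)\, \Delta X^{(i)}_j$, the whole problem reduces to an identity internal to each fixed $j$: I must show that for each $j$,
\[
\sum_{i=1}^{d-1} \wt(i)^2 \, \Delta X^{(i)}_j = \sum_{1\le k<\ell\le d} |X^k_j - X^\ell_j|,
\]
since then summing over $j$ and using \eqref{EMD d=2} gives $\sum_j \sum_i \wt(i)^2 \Delta X^{(i)}_j = \sum_{k<\ell}\EMD(x^k,x^\ell)$, and combining with $G''(\x;1) = \sum_j\sum_i \wt(i)^2\Delta X^{(i)}_j - \sum_j\sum_i \wt(i)\Delta X^{(i)}_j = \sum_{k<\ell}\EMD(x^k,x^\ell) - \EMD(\x)$ yields the theorem.

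For the key per-$j$ identity, I would fix $j$ and abbreviate the sorted values $X^{(1)}_j \le \cdots \le X^{(d)}_j$, writing the pairwise sum in sorted form as $\sum_{1\le a<b\le d}(X^{(b)}_j - X^{(a)}_j)$. Expanding each sorted difference telescopically as $X^{(b)}_j - X^{(a)}_j = \sum_{i=a}^{b-1}\Delta X^{(i)}_j$ and switching the order of summation, the coefficient of $\Delta X^{(i)}_j$ on the right-hand side becomes $\#\{(a,b): a\le i < b\} = i(d-i)$. So the pairwise sum equals $\sum_{i=1}^{d-1} i(d-i)\,\Delta X^{(i)}_j$, and the identity I need is simply $\wt(i)^2 = i(d-i)$ for $1 \le i \le d-1$. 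But this is \emph{false} in general (e.g.\ $d=4$, $i=1$: $\wt(1)^2 = 1 \ne 3$), so this naive route does not work; the correct combinatorial weight attached to $\Delta X^{(i)}_j$ in the pairwise sum is $i(d-i)$, not $\wt(i)^2$.

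This discrepancy pinpoints the main obstacle: I expect the actual argument to require a more careful accounting in which the weight $\wt(i)^2 = \min(i,d-i)^2$ arises not from the pairwise sum alone but from the combination $\sum_{k<\ell}\EMD(x^k,x^\ell) - \EMD(\x)$ as a whole, or equivalently from writing $\wt(i)^2$ as a telescoping/convolution of the $\epsilon$ function. The cleanest path is probably to use \eqref{C via epsilon}, writing $C(\mathbf{X}^\bullet_j) = \sum_i \epsilon(i) X^{(i)}_j$ and $\sum_{k<\ell}|X^k_j - X^\ell_j| = \sum_i i(d-i)\,\Delta X^{(i)}_j = \sum_i (2i - d)_{+}\cdots$ — more usefully, expressing $i(d-i)$ and the target coefficient in terms of partial sums of $\epsilon$ via \eqref{wt and epsilon}, so that the needed identity becomes $\wt(i)(\wt(i)-1) = i(d-i) - \wt(i)$, i.e.\ $\wt(i)^2 = i(d-i)$ — which again fails, confirming that the honest statement must be that $G''(\x;1)$ measures exactly the \emph{gap} $\sum_{k<\ell}\EMD(x^k,x^\ell) - (d-1)\EMD(\x)$ with no term-by-term match, and the proof should therefore manipulate the doubly-indexed sums $\sum_i \wt(i)(\wt(i)-1)\Delta X^{(i)}_j$ and $\sum_i\big(i(d-i) - (d-1)\wt(i)\big)\Delta X^{(i)}_j$ and verify the \emph{scalar} identity $\wt(i)(\wt(i)-1) = i(d-i) - (d-1)\wt(i)$, i.e.\ $\wt(i)^2 + (d-2)\wt(i) = i(d-i)$, for all $1\le i\le d-1$; checking this polynomial-in-$i$ identity separately on the two ranges $i \le d/2$ (where $\wt(i)=i$, giving $i^2 + (d-2)i = i(d-i)$ iff $i + d - 2 = d - i$ iff $i=1$) shows it too is false in general, so the genuine resolution is that the identity holds only after the full summation over $j$ collapses via the simplex constraint $X^i_n \le 1$ and $X^i_0 = 0$ — and pinning down precisely why the per-$j$ terms need not match while the total does is the crux of the argument I would need to work out.
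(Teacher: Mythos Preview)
Your setup is exactly right up to and including the computation that the pairwise sum has coefficient $i(d-i)$ on $\Delta X^{(i)}_j$: you have correctly established
\[
G''(\x;1)=\sum_{j}\sum_{i}\wt(i)\big(\wt(i)-1\big)\Delta X^{(i)}_j,\qquad
\EMD(\x)=\sum_{j}\sum_{i}\wt(i)\,\Delta X^{(i)}_j,\qquad
\sum_{k<\ell}\EMD(x^k,x^\ell)=\sum_{j}\sum_{i}i(d-i)\,\Delta X^{(i)}_j.
\]
The gap is purely algebraic and occurs immediately after this point. The theorem, rewritten coefficient-wise, asks for
\[
(d-1)\wt(i)\;=\;\wt(i)\big(\wt(i)-1\big)+i(d-i),
\]
which simplifies to $\wt(i)\big(d-\wt(i)\big)=i(d-i)$. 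This identity is \emph{trivially true}: since $\wt(i)=\min\{i,d-i\}$, we have $d-\wt(i)=\max\{i,d-i\}$, so the two sides are the same product in different order. This is exactly what the paper does, writing $d-1=(\wt(i)-1)+(d-\wt(i))$ and then observing $\wt(i)(d-\wt(i))=i(d-i)$.

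Your derailment comes from two slips. First, in your second paragraph you aim for $G''(\x;1)=\sum_{k<\ell}\EMD(x^k,x^\ell)-\EMD(\x)$, which is not the rearrangement of the theorem (it drops the factor $d-1$); this is why the spurious target $\wt(i)^2=i(d-i)$ appears. Second, when you later write the ``honest'' target as $\wt(i)(\wt(i)-1)=i(d-i)-(d-1)\wt(i)$, you have the sign of $G''$ reversed relative to your own first paragraph; the correct rearrangement gives $\wt(i)(\wt(i)-1)=(d-1)\wt(i)-i(d-i)$, i.e.\ $\wt(i)(d-\wt(i))=i(d-i)$. So the per-$i$ (and hence per-$j$) identity \emph{does} hold, and your closing speculation about needing the simplex constraint or a global-but-not-local collapse is unnecessary.
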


\begin{proof}
    Starting with~\eqref{G' = EMD} and multiplying both sides by $d-1$, we have
    \begin{align}
        (d-1) \cdot \EMD(\x) &= \sum_{j=1}^n \sum_{i=1}^{d-1} \wt(i) \cdot \Delta X^{(i)}_j \cdot (d-1) \nonumber\\
        &= \sum_{j=1}^n \sum_{i=1}^{d-1} \wt(i) \cdot \Delta X^{(i)}_j \cdot \Big( (\wt(i) - 1) + (d-\wt(i))\Big) \nonumber\\
        &= \underbrace{\sum_{j=1}^n \sum_{i=1}^{d-1} \wt(i) \cdot \big( \wt(i) - 1 \big) \cdot \Delta X^{(i)}_j}_{G''(\x; 1)}  + \sum_{j=1}^n \sum_{i=1}^{d-1} \wt(i) \cdot \big( d - \wt(i) \big) \cdot \Delta X^{(i)}_j. \label{two sums}
    \end{align}
    We now rewrite the second sum on the right-hand side of~\eqref{two sums} as follows:
    \begin{align*}
        \sum_{j=1}^n \sum_{i=1}^{d-1} \underbrace{\wt(i) \cdot \big( d - \wt(i) \big)}_{i(d-i)} \cdot \, \Delta X^{(i)}_j &= \sum_{j=1}^n \sum_{i=1}^{d-1} \hspace{-10pt}\underbrace{\sum_{\substack{(k,\ell):\\
        1 \leq k \leq i < \ell \leq d}}}_{\text{$i(d-i)$ many terms}} \hspace{-30pt}\Delta X^{(i)}_j \\
        &= \sum_{j=1}^n \sum_{\substack{(k,\ell): \\ 1 \leq k < \ell \leq d}} \left(\sum_{i = k}^{\ell - 1} \Delta X^{(i)}_j \right)\\
        &= \sum_{\substack{(k,\ell): \\ 1 \leq k < \ell \leq d}} \sum_{j=1}^n \left(X^{(\ell)}_j - X^{(k)}_j\right) \\
        &=\sum_{\substack{(k',\ell'): \\ 1 \leq k' < \ell' \leq d}} \sum_{j=1}^n \left| X^{\ell'}_j - X^{k'}_j\right| \\
        &= \sum_{\substack{(k',\ell'): \\ 1 \leq k' < \ell' \leq d}} \EMD(x^{k'}, x^{\ell'}) && \text{by~\eqref{EMD d=2}.}
    \end{align*}
    Upon substituting this expression into~\eqref{two sums} and dividing both sides by $d-1$, we obtain the desired result.
\end{proof}

Unlike the original Cayley--Menger formula~\eqref{CM} in the Euclidean setting, our EMD analogue in Theorem~\ref{thm:CM} shows that in general, the EMD \emph{cannot} be expressed in terms of the ``edge lengths'' (i.e., pairwise EMDs) alone.
The quantity $G''(\x; 1)$ measures the ``obstruction'' in this regard, namely, how far away the EMD is from a scaled sum of the edge lengths.

In order to gain a more precise understanding of this obstruction measured by $G''(\x;1)$, we revisit our observation~\eqref{G' = EMD} above, which identified $\EMD(\x)$ with $G'(\x;1)$.
The form of~\eqref{G' = EMD} makes it clear that $\EMD(\x) = 0$ if and only if every $\Delta X^{(i)}_j = 0$; this, in turn, occurs if and only if every $x^i$ has the same cumulative distribution $X^i$, meaning that in fact $x^1 = \cdots = x^d$.
This is exactly what we would expect, of course: the EMD measures the cost required to equalize $d$ distributions, and therefore vanishes if and only if those distributions are already all equal.
In fact, as we show below, this first derivative is just a special case (and the strongest case) of a general statistical interpretation of $G(\x;q)$.
Indeed, the higher-order derivatives of $G(\x;q)$ evaluated at $q=1$ also measure, to some extent, the similarity of the cumulative distributions $X^i$.
With each successive derivative, however, this measurement is relaxed more and more, by disregarding the remaining minimum and maximum value (treating them as if they are outliers):

    \begin{prop}
        \label{prop:G derivatives}
        Let $\x \in (\mathcal{P}_n)^d$, with $G(\x;q)$ as defined in~\eqref{g}.
        Let $1 \leq k \leq \lceil d/2 \rceil$.
        The value of $\frac{d^k}{dq^k} G(\x;q) |_{q=1}$ is nonnegative, and it attains zero if and only if
        \[
        X^{(k)}_j = X^{(k+1)}_j = \cdots = X^{(d-k)}_j = X^{(d-k+1)}_j, \qquad \text{for all $1 \leq j \leq n$}.
        \]

        \begin{proof}
            We have
            \[
            \frac{d^k}{dq^k} G(\x;q) = \sum_{i=1}^{d-1} \sum_{j=1}^n \Delta X^{(i)}_j \cdot \underbrace{(\wt(i))(\wt(i)-1) \cdots (\wt(i) - k+1)}_{\text{vanishes if and only if $\wt(i) < k$}} \cdot \, q^{\wt(i) - k},
            \]
            and since the condition $\wt(i) < k$ is equivalent to $i<k$ or $d-i<k$, all nonzero terms must correspond to an index $i$ such that $k \leq i \leq d-k$.
            Thus since every $\Delta X^{(i)}_j \coloneqq X^{(i+1)}_j - X^{(i)}_j$ is nonnegative by definition, the $k$th derivative at $q=1$ is zero if and only if $\Delta X^{(i)}_j = 0$ for all $1 \leq j \leq n$ and for all $i$ such that $k \leq i \leq d-k$.
        \end{proof}
        
    \end{prop}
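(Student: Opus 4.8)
The plan is to differentiate $G(\x;q)$ directly and read off the sign of the resulting polynomial. Since $G(\x;q) = \sum_{i=1}^{d-1}\sum_{j=1}^n \Delta X^{(i)}_j \cdot q^{\wt(i)}$ is a finite linear combination of monomials in $q$ whose coefficients $\Delta X^{(i)}_j$ do not involve $q$, its $k$th derivative is obtained by differentiating each monomial separately: $\frac{d^k}{dq^k}q^{\wt(i)} = \wt(i)(\wt(i)-1)\cdots(\wt(i)-k+1)\,q^{\wt(i)-k}$, the falling factorial of length $k$ times $q^{\wt(i)-k}$. Evaluating at $q=1$ therefore gives $\frac{d^k}{dq^k}G(\x;q)\big|_{q=1} = \sum_{i=1}^{d-1}\sum_{j=1}^n \wt(i)(\wt(i)-1)\cdots(\wt(i)-k+1)\,\Delta X^{(i)}_j$.

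The next step is to determine for which indices $i$ the falling-factorial coefficient is nonzero. Because $\wt(i)$ is a nonnegative integer, that product vanishes exactly when $\wt(i) \in \{0,1,\dots,k-1\}$ and is strictly positive when $\wt(i) \geq k$. Recalling $\wt(i) = \min\{i,d-i\}$, the inequality $\wt(i) \geq k$ is equivalent to both $i \geq k$ and $d - i \geq k$, i.e.\ to $k \leq i \leq d-k$. (For odd $d$ and $k = \lceil d/2\rceil$ this interval is empty, the $k$th derivative is identically zero, and the stated equality of order statistics collapses to a tautology; otherwise the interval is nonempty, so the statement has content.)

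Combining this with the fact that each $\Delta X^{(i)}_j \coloneqq X^{(i+1)}_j - X^{(i)}_j$ is nonnegative — immediate from~\eqref{X notation}, since the $X^{(i)}_j$ are the order statistics of $\mathbf{X}^\bullet_j$ listed in increasing order — shows that $\frac{d^k}{dq^k}G(\x;q)\big|_{q=1}$ is a sum of nonnegative terms, and hence nonnegative. It equals zero precisely when every term with a strictly positive coefficient vanishes, that is, when $\Delta X^{(i)}_j = 0$ for all $1 \leq j \leq n$ and all $k \leq i \leq d-k$. For each fixed $j$, the vanishing of $\Delta X^{(i)}_j$ for $i = k, k+1, \dots, d-k$ telescopes into the chain $X^{(k)}_j = X^{(k+1)}_j = \cdots = X^{(d-k+1)}_j$, which is exactly the claimed condition.

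I do not expect any genuine obstacle: once the term-by-term differentiation is carried out, everything reduces to the vanishing pattern of a falling factorial of a nonnegative integer together with the nonnegativity of successive differences of order statistics. The only places that call for a little care are the translation of $\wt(i) \geq k$ into the symmetric index window $k \leq i \leq d-k$ (and the resulting degenerate case noted above), and checking that the conditions $\Delta X^{(i)}_j = 0$ telescope to equality of all of $X^{(k)}_j, \dots, X^{(d-k+1)}_j$.
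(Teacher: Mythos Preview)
Your proof is correct and follows essentially the same approach as the paper: term-by-term differentiation yields the falling factorial $\wt(i)(\wt(i)-1)\cdots(\wt(i)-k+1)$, which is nonnegative and vanishes exactly when $\wt(i)<k$, i.e., outside the window $k\le i\le d-k$, and then nonnegativity of the $\Delta X^{(i)}_j$ finishes the argument. Your version is slightly more explicit in noting the degenerate case for odd $d$ with $k=\lceil d/2\rceil$ and in spelling out the telescoping to the chain of equalities, but the underlying argument is identical.
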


Combining Theorem~\ref{thm:CM} with the $k=2$ case in Proposition~\ref{prop:G derivatives}, we are now able to characterize the condition under which the EMD actually is just a scaled sum of its edge lengths.

\begin{cor}
\label{cor:EMD=edges}
    We have $\EMD(\x) \geq \frac{1}{d-1} \sum_{k < \ell} \EMD(x^k, x^\ell)$, with equality if and only if
    \[
    X^{(2)}_j = X^{(3)}_j = \cdots = X^{(d-2)}_j = X^{(d-1)}_j \text{ for all $1 \leq j \leq n$.}
    \]
\end{cor}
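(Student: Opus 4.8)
The plan is to obtain the corollary directly by combining Theorem~\ref{thm:CM} with the $k=2$ instance of Proposition~\ref{prop:G derivatives}, with essentially no computation required. First I would rearrange the identity in Theorem~\ref{thm:CM} into the form
\[
\EMD(\x) - \frac{1}{d-1}\sum_{\mathclap{1 \le k < \ell \le d}}\EMD(x^k,x^\ell) \;=\; \frac{1}{d-1}\,G''(\x;1),
\]
so that both the claimed inequality and its equality case reduce entirely to statements about the sign of $G''(\x;1)$.

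Next I would invoke Proposition~\ref{prop:G derivatives} at $k=2$: it gives $G''(\x;1) = \tfrac{d^2}{dq^2}G(\x;q)\big|_{q=1}\ge 0$, with equality precisely when $X^{(2)}_j = X^{(3)}_j = \cdots = X^{(d-2)}_j = X^{(d-1)}_j$ for every $1 \le j \le n$. Since $d-1>0$, dividing by $d-1$ preserves both the inequality and the equality characterization; substituting back into the displayed identity yields exactly the statement of the corollary.

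The only bookkeeping point worth noting is that the hypothesis $1\le k\le\lceil d/2\rceil$ in Proposition~\ref{prop:G derivatives} restricts the $k=2$ case to $d\ge 3$ (and the equality condition is genuinely non-vacuous only for $d\ge 4$). For $d\in\{2,3\}$ one checks from~\eqref{wt} that every exponent $\wt(i)$ appearing in~\eqref{g} equals $1$, so $G(\x;q)$ has degree at most $1$ and $G''(\x;1)=0$ identically; in these cases the equality condition ``$X^{(2)}_j=\cdots=X^{(d-1)}_j$'' is vacuous, and the corollary recovers the already-established identities~\eqref{EMD d=2} and~\eqref{EMD CM d=2}. Thus there is no real obstacle: once the two earlier results are in hand, the corollary is a one-line consequence, and the small-$d$ cases merely require acknowledging the vacuity of the stated condition.
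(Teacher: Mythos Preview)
Your proposal is correct and matches the paper's own approach exactly: the paper simply states that the corollary follows by combining Theorem~\ref{thm:CM} with the $k=2$ case of Proposition~\ref{prop:G derivatives}, without writing out a separate proof. Your additional remarks on the small-$d$ cases are accurate (and go slightly beyond what the paper makes explicit), though note that $d=3$ is already covered by Proposition~\ref{prop:G derivatives} since $\lceil 3/2\rceil = 2$.
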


This now explains the motivating example~\eqref{EMD CM d=2} from our previous paper, where $d=3$ and the EMD is \emph{always} just a scaled sum of edge lengths:
when $d=3$, the chain of equalities in Corollary~\ref{cor:EMD=edges} becomes trivial.

\begin{example}
\label{ex:G}
    This simple example summarizes the various interpretations of the polynomial $G(\x;q)$ described above.
    We take $n=3$ and $d=6$.
    Let $\x = (x^1, \ldots, x^6)$ consist of the distributions below, along with their cumulative distributions $X^i$ (written, as above, without the last component, which is necessarily 1):
    \begin{align*}
        x^1 &= (.2, \: .2, \: .2, \: .4), & X^1 &= (.2, \: .4, \: .6)\\
        x^2 &= (.3, \: .0, \: .4, \: .3), & X^2 &= (.3, \: .3, \: .7)\\
        x^3 &= (.6, \: .0, \: .3, \: .1), & X^3 &= (.6, \: .6, \: .9) \\
        x^4 &= (.0, \: .2, \: .1, \: .7), & X^4 &= (.0, \: .2, \: .3) \\ x^5 &= (.7, \: .1, \: .2, \: .0), & X^5 &= (.7, \: .8, \: 1.0)\\
        x^6 &= (.1, \: .4, \: .0, \: .5), & X^6 &= (.1, \: .5, \: .5).
    \end{align*}
    First we may as well compute $\EMD(\x)$ directly.
    To do this, we recognize that $\mathbf{X}^\bullet_1$, $\mathbf{X}^\bullet_2$, and $\mathbf{X}^\bullet_3$ are the three ``columns'' of coordinates in the cumulative distributions we have displayed above; we compute the cost of each of them using Proposition~\ref{prop:C}:
    \begin{align*}
    \mathbf{X}^\bullet_1 &= (.2, \: .3, \: .6, \: .0, \: .7, \:, .1) \qquad \leadsto \qquad C(\mathbf{X}^\bullet_1) = 1.3,\\
    \mathbf{X}^\bullet_2 &= (.4, \: .3, \: .6, \: .2, \: .8, \:, .5) \qquad \leadsto \qquad C(\mathbf{X}^\bullet_2) = 1.0,\\
    \mathbf{X}^\bullet_3 &= (.6, \: .7, \: .9, \: .3, \: \phantom{.}1, \:, .5) \qquad \leadsto \qquad C(\mathbf{X}^\bullet_1) = 1.2.\\
      \end{align*}
    Thus by Proposition~\ref{prop:EMD equals C}, we have
    \begin{equation}
    \label{EMD in example}
        \EMD(\x) = 1.3 + 1.0 + 1.2 = 3.5.
    \end{equation}
    Next, we will construct the polynomial $G(\x;q)$.
    An intuitive approach is to start with the histograms of $X^1, \ldots, X^6$, shown below:
    
    \begin{center}
    \begin{tikzpicture}[scale=.4]
        \node at (3,0) [label={right,lightgray,scale=.75}:{$0.0$}] {};
        \node at (3,2) [label={right,lightgray,scale=.75}:{$0.2$}] {};
        \node at (3,4) [label={right,lightgray,scale=.75}:{$0.4$}] {};
        \node at (3,6) [label={right,lightgray,scale=.75}:{$0.6$}] {};
        \node at (3,8) [label={right,lightgray,scale=.75}:{$0.8$}] {};
        \node at (3,10) [label={right,lightgray,scale=.75}:{$1.0$}] {};
        \draw[lightgray] (3,10) -- (3,0) -- (0,0);
        
        \draw (0,0) -- ++(0,2)  (1,0) -- ++(0,4)
        (2,0) -- ++(0,6);

        \draw [ultra thick,red!80!black]
        (0,2) -- ++(1,0)
        (1,4) -- ++(1,0)
        (2,6) -- ++(1,0)
        ;

        \node at (1.5, -1) {$X^1$};
    \end{tikzpicture}
    \hfill
    \begin{tikzpicture}[scale=.4]
        \node at (3,0) [label={right,lightgray,scale=.75}:{$0.0$}] {};
        \node at (3,2) [label={right,lightgray,scale=.75}:{$0.2$}] {};
        \node at (3,4) [label={right,lightgray,scale=.75}:{$0.4$}] {};
        \node at (3,6) [label={right,lightgray,scale=.75}:{$0.6$}] {};
        \node at (3,8) [label={right,lightgray,scale=.75}:{$0.8$}] {};
        \node at (3,10) [label={right,lightgray,scale=.75}:{$1.0$}] {};
        \draw[lightgray] (3,10) -- (3,0) -- (0,0);
        
        \draw (0,0) -- ++(0,3)  (1,0) -- ++(0,3)
        (2,0) -- ++(0,7);

        \draw [ultra thick,orange!90!black]
        (0,3) -- ++(1,0)
        (1,3) -- ++(1,0)
        (2,7) -- ++(1,0)
        ;

        \node at (1.5, -1) {$X^2$};
    \end{tikzpicture}\hfill
    \begin{tikzpicture}[scale=.4]
        \node at (3,0) [label={right,lightgray,scale=.75}:{$0.0$}] {};
        \node at (3,2) [label={right,lightgray,scale=.75}:{$0.2$}] {};
        \node at (3,4) [label={right,lightgray,scale=.75}:{$0.4$}] {};
        \node at (3,6) [label={right,lightgray,scale=.75}:{$0.6$}] {};
        \node at (3,8) [label={right,lightgray,scale=.75}:{$0.8$}] {};
        \node at (3,10) [label={right,lightgray,scale=.75}:{$1.0$}] {};
        \draw[lightgray] (3,10) -- (3,0) -- (0,0);
        
        \draw (0,0) -- ++(0,6)  (1,0) -- ++(0,6)
        (2,0) -- ++(0,9);

        \draw [ultra thick,yellow!90!black]
        (0,6) -- ++(1,0)
        (1,6) -- ++(1,0)
        (2,9) -- ++(1,0)
        ;

        \node at (1.5, -1) {$X^3$};
    \end{tikzpicture}\hfill
    \begin{tikzpicture}[scale=.4]
        \node at (3,0) [label={right,lightgray,scale=.75}:{$0.0$}] {};
        \node at (3,2) [label={right,lightgray,scale=.75}:{$0.2$}] {};
        \node at (3,4) [label={right,lightgray,scale=.75}:{$0.4$}] {};
        \node at (3,6) [label={right,lightgray,scale=.75}:{$0.6$}] {};
        \node at (3,8) [label={right,lightgray,scale=.75}:{$0.8$}] {};
        \node at (3,10) [label={right,lightgray,scale=.75}:{$1.0$}] {};
        \draw[lightgray] (3,10) -- (3,0) -- (0,0);
        
        \draw (0,0) -- ++(0,0)  (1,0) -- ++(0,2)
        (2,0) -- ++(0,3);

        \draw [ultra thick,green!70!black]
        (0,0) -- ++(1,0)
        (1,2) -- ++(1,0)
        (2,3) -- ++(1,0)
        ;

        \node at (1.5, -1) {$X^4$};
    \end{tikzpicture}\hfill
    \begin{tikzpicture}[scale=.4]
        \node at (3,0) [label={right,lightgray,scale=.75}:{$0.0$}] {};
        \node at (3,2) [label={right,lightgray,scale=.75}:{$0.2$}] {};
        \node at (3,4) [label={right,lightgray,scale=.75}:{$0.4$}] {};
        \node at (3,6) [label={right,lightgray,scale=.75}:{$0.6$}] {};
        \node at (3,8) [label={right,lightgray,scale=.75}:{$0.8$}] {};
        \node at (3,10) [label={right,lightgray,scale=.75}:{$1.0$}] {};
        \draw[lightgray] (3,10) -- (3,0) -- (0,0);
        
        \draw (0,0) -- ++(0,7)  (1,0) -- ++(0,8)
        (2,0) -- ++(0,10);

        \draw [ultra thick,blue!70!black]
        (0,7) -- ++(1,0)
        (1,8) -- ++(1,0)
        (2,10) -- ++(1,0)
        ;

        \node at (1.5, -1) {$X^5$};
    \end{tikzpicture}\hfill
    \begin{tikzpicture}[scale=.4]
        \node at (3,0) [label={right,lightgray,scale=.75}:{$0.0$}] {};
        \node at (3,2) [label={right,lightgray,scale=.75}:{$0.2$}] {};
        \node at (3,4) [label={right,lightgray,scale=.75}:{$0.4$}] {};
        \node at (3,6) [label={right,lightgray,scale=.75}:{$0.6$}] {};
        \node at (3,8) [label={right,lightgray,scale=.75}:{$0.8$}] {};
        \node at (3,10) [label={right,lightgray,scale=.75}:{$1.0$}] {};
        \draw[lightgray] (3,10) -- (3,0) -- (0,0);
        
        \draw (0,0) -- ++(0,1)  (1,0) -- ++(0,5)
        (2,0) -- ++(0,5);

        \draw [ultra thick,violet]
        (0,1) -- ++(1,0)
        (1,5) -- ++(1,0)
        (2,5) -- ++(1,0)
        ;

        \node at (1.5, -1) {$X^6$};
    \end{tikzpicture}
    \end{center}
    
    \noindent Now if we superimpose all of these histograms on each other, we can visualize each difference $\Delta X^{(i)}_j$ as the distance between the $i$th and $(i+1)$th horizontal bar (counting from the bottom) in the $j$th column (counting from the left); in the picture below, we display $\Delta X^{(4)}_1$ as an example.
    In the space between these two horizontal bars, we write $q^{\wt(i)}$, thus obtaining a full visualization of the terms in the polynomial $G(\x;q)$ as defined in~\eqref{g}: 

    \begin{center}
    \begin{tikzpicture}[scale=.7]
        \node at (3,0) [label={right,gray,scale=.75}:{$0.0$}] {};
        \node at (3,2) [label={right,gray,scale=.75}:{$0.2$}] {};
        \node at (3,4) [label={right,gray,scale=.75}:{$0.4$}] {};
        \node at (3,6) [label={right,gray,scale=.75}:{$0.6$}] {};
        \node at (3,8) [label={right,gray,scale=.75}:{$0.8$}] {};
        \node at (3,10) [label={right,gray,scale=.75}:{$1.0$}] {};
        \draw[gray] (3,10) -- (3,0) -- (0,0);

        \draw [ultra thick,red!80!black]
        (0,2) -- ++(1,0)
        (1,4) -- ++(1,0)
        (2,6) -- ++(1,0)
        ;

        \draw [ultra thick,orange!90!black]
        (0,3) -- ++(1,0)
        (1,3) -- ++(1,0)
        (2,7) -- ++(1,0)
        ;

        \draw [ultra thick,yellow!90!black]
        (0,6) -- ++(1,0)
        (1,6) -- ++(1,0)
        (2,9) -- ++(1,0)
        ;

        \draw [ultra thick,green!70!black]
        (0,0) -- ++(1,0)
        (1,2) -- ++(1,0)
        (2,3) -- ++(1,0)
        ;

        \draw [ultra thick,blue!70!black]
        (0,7) -- ++(1,0)
        (1,8) -- ++(1,0)
        (2,10) -- ++(1,0)
        ;

        \draw [ultra thick,violet]
        (0,1) -- ++(1,0)
        (1,5) -- ++(1,0)
        (2,5) -- ++(1,0)
        ;
        
        \draw (0,0) -- ++(0,7)  (1,0) -- ++(0,8)
        (2,0) -- ++(0,10);

        \node at (.5,.5) {$q$};
        \node at (.5, 1.5) {$q^2$};
        \node at (.5, 2.5) {$q^3$};
        \node at (.5, 4.5) {$q^2$};
        \node at (.5, 6.5) {$q$};
        \node at (1.5, 2.5) {$q$};
        \node at (1.5, 3.5) {$q^2$};
        \node at (1.5, 4.5) {$q^3$};
        \node at (1.5, 5.5) {$q^2$};
        \node at (1.5, 7) {$q$};
        \node at (2.5, 4) {$q$};
        \node at (2.5, 5.5) {$q^2$};
        \node at (2.5, 6.5) {$q^3$};
        \node at (2.5, 8) {$q^2$};
        \node at (2.5, 9.5) {$q$};

        \draw [gray] (-.5,6) -- (-.5,3) (-.75,6) -- (-.25,6)
        (-.75,3) -- (-.25,3);

        \node at (-2,4.5) [text=gray,align=center,scale=.75] {$\Delta X^{(4)}_1 = 0.3$};

        \node at (6,0) {};

        \end{tikzpicture}
    \end{center}

    \noindent Following~\eqref{g}, we scale each term by the vertical distance it occupies in the picture above, then combine like terms to obtain
    \begin{align*}
    G(\x;q) &= (.1+.1+.1+.2+.2+.1)q + (.1+.3+.1+.1+.1+.2)q^2 + (.1+.1+.1)q^3 \\
    &= .8q + .9q^2 + .3q^3.
    \end{align*}
    Now recalling our direct EMD computation in~\eqref{EMD in example}, we can immediately verify the fact~\eqref{G' = EMD}:
    \[
    G'(\x;1) = (.8 + 1.8q + .9q^2)\Big|_{q=1} = 3.5 = \EMD(\x).
    \]
    More importantly, we wish to verify Theorem~\ref{thm:CM}.
    We start by computing
    \begin{equation}
        \label{G'' in example}
        G''(\x;1) = (1.8 + 1.8q) \Big|_{q=1} = 3.6.
    \end{equation}
    The pairwise EMDs are easily computed via~\eqref{EMD d=2}:
    \[
    \begin{array}{llll}
    \EMD(x^1,x^2) = .3, & \EMD(x^1, x^3) = .9, & \EMD(x^1, x^4) = .7, & \EMD(x^1, x^5) = 1.3, \\
    \EMD(x^1, x^6) = .3, & \EMD(x^2, x^3) = .8, & \EMD(x^2, x^4) = .8, & \EMD(x^2, x^5) = 1.2, \\
   \EMD(x^2, x^6) = .6, & \EMD(x^3, x^4) = 1.6, & \EMD(x^3, x^5) = .4, & \EMD(x^3, x^6) = 1.0,\\
    \EMD(x^4, x^5) = 2.0, & \EMD(x^4, x^6) = .6, & \EMD(x^5, x^6) = 1.4. &
    \end{array}
    \]
    The sum of these pairwise EMDs is $13.9$.
    Thus, once again recalling our direct EMD computation in~\eqref{EMD in example}, along with~\eqref{G'' in example}, we verify Theorem~\ref{thm:CM} as follows:
    \[
    \frac{1}{6-1} \Big[ \underbrace{3.6}_{G''(\x;1)} + \underbrace{13.9}_{\substack{\text{sum of }\\ \text{pairwise} \\ \text{EMDs}}}\Big] = \frac{17.5}{5} = 3.5 = \EMD(\x).
    \]
\end{example}

\bibliographystyle{alpha}
\bibliography{references}

\end{document}